\newcommand{\laplace}{\triangle}
\newcommand{\del}{\nabla}
\newcommand{\ulx} {{\underline x}}
\newcommand{\bfe} {{\mathbf e}}
\newtheorem{thm}{\bf Theorem}[section]
\newtheorem{prop}[thm]{\bf Proposition}
\newtheorem{defn}[thm]{\bf Definition}
\title{On the Bergman kernel in weighted monogenic Bargmann-Fock spaces 
	\footnotetext{The first author was supported in part by NSFC Grant No. 11901594. The second author was supported by NSFC Grant No. 12001549, Guangdong Basic and Applied Basic Research Foundation Grant No. 2019A1515110250 and Science and Technology Projects in Guangzhou Grant No. 201904010436.}}
\author{Weixiong Mai, Guokuan Shao}
\date{}
\begin{document}
	\maketitle
	\begin{center}
		\begin{minipage}{120mm}
			\begin{center}{\bf Abstract}\end{center}
			{ In this paper, we study the Bergman kernel $B_\varphi(x,y)$ of generalized Bargmann-Fock spaces in the setting of Clifford algebra. The versions of $L^2$-estimate method and weighted subharmonic inequality for Clifford algebra are established. Consequently we show the existence of $B_\varphi(x,y)$ and then give some estimates on- and off- the diagonal. As a by-product, we also obtain an upper estimate of the weighted harmonic Bergman kernel.\\
				
				{\bf Key words}: Bergman kernel, Clifford algebra, monogenic function, Bargmann-Fock space, $L^2$-estimate, Moser's iteration \\
				{\bf MSC2020} 30G35 32A25 15A66
			}
		\end{minipage}
	\end{center}
	\tableofcontents
	
	\section{Introduction}
The Bergman kernel plays a crucial role in the study of complex analysis and geometry. The asymptotic expansions and estimates for the Bergman kernel has been studied in many different context of domains on $\mathbb C^n$, as well as line bundles on compact complex manifolds started in the paper of Tian \cite{Tian} (cf. e.g. \cite{BFG, Zelditch, MM, Schuster-Varolin,HM}). It is not only of independent interest but also has significant applications in some important problems such as existence of K\"ahler metrics (cf.\cite{D}), Beurling-Landau density (cf. e.g. \cite{Berndtsson, Ortega-Cerda-Seip,Lindholm, Ameur-Ortega}) and equidistribution for zeros of holomorphic sections or Fekete points (cf. e.g. \cite{SZ, CMN, Lev-Ortega}).
	
	In one and several complex variables, a lot of effort has been made to estimate the Bergman kernel on- and off-diagonal. For instance, in the case of one complex variable M. Christ \cite{Christ} and J. Marzo and J. Ortega-Cerd\`a \cite{Marzo-Ortega} showed pointwise estimates for the Bergman kernel of the weighted Fock space $F^2_{\varphi}(\mathbb C)$ under the assumption that $\laplace \varphi$ is a doubling measure, and under a similar assumption H. Delin \cite{Delin} and H. Lindholm \cite{Lindholm} extended this result to several complex variables. 

	In this paper we will focus on the estimates for the Bergman kernel of generalized Bargmann-Fock spaces in the setting of Clifford algebra. It would be natural to expect an analogous result in Clifford analysis since the monogenic function theory in the Clifford algebra setting can be considered as a generalization of the holomorphic function theory to $\mathbb R^{n+1},$ which is different from the case in several complex variables. Due to the importance of the Bargmann-Fock space, there are various generalizations in Clifford analysis (see e.g. \cite{Gong-Leong-Qian, Pena-Sabadini-Sommen, Mourao-Nunes-Qian, Bernstein-Schufmann}). 

Note that 
$\mathcal{A}_n$ is a real Clifford algebra over $\mathbb R^{n+1}$, where $\mathbb R^{n+1}=\{x=x_0\bfe_0+\ulx:x_0\in\mathbb R, \ulx=x_1\bfe_1+\cdots +x_n\bfe_n\in \mathbb R^n\},$ based on the multiplication rules
	\begin{align*}
		&\bfe_j\bfe_k + \bfe_k\bfe_j =0, \quad j\neq k,\\
		&\bfe_j^2=-1, \quad j=1,2,...,n,
	\end{align*}
	and $\bfe_0=1$ as the unit element. Denote by 
\begin{equation*}
	F_\varphi^2(\mathbb R^{n+1}, \mathcal{A}_n):=  \{u\text{ is left monogenic on } \mathbb R^{n+1}: ||u||_{\varphi}^2= \int_{\mathbb R^{n+1}} |u(x)|^2e^{-2\varphi(x)}dx<\infty\}
	\end{equation*}
the Bargmann-Fock space associated with the weighted function $\varphi$, which consists of left monogenic functions (see \S 2 for details), i.e., for $u\in F^2_\varphi(\mathbb R^{n+1},\mathcal{A}_n)$ we have
	\begin{align*}
		Du =\sum_{j=0}^n\bfe_j\frac{\partial u}{\partial x_j}=0,\quad \text{for $u$ taking values in }\mathcal{A}_n, 
	\end{align*}
 where $D$ is the Dirac operator. In many cases, the study of the Bargmann-Fock space in the Clifford algebra setting is based on $\varphi(x)=|x|^2.$ 
	
	Generally, in the setting of complex analysis, to obtain the upper bound estimate for the Bergman kernel on the diagonal, one needs a weighted mean-value inequality for holomorphic functions in $F_\varphi^2(\mathbb C^n),\ n\geq 1$. It is known that J. Ortega-Cred\`a and K. Seip proved the inequality for functions in $F_\varphi^2(\mathbb C)$ in \cite{Ortega-Cerda-Seip} which is based on the properties of holomorphic functions, and  the higher dimensional case was proved in \cite{Delin}( see also \cite{Lindholm}) by using H\"ormander's $L^2$-estimate for the $\overline \partial$ operator. In the lower bound estimate for the Bergman kernel on the diagonal,  H\"ormander's $L^2$-estimate for the $\overline \partial$ operator also plays a crucial role (see e.g. \cite{Lindholm}). 

    Unfortunately, neither the above arguments used in the upper bound estimate nor those used in the lower bound estimate could be employed in our case. 
    Unlike the complex case we do not even have an explicit formula for the Bergman kernel in $F_{|x|^2}^2(\mathbb R^{n+1},\mathcal{A}_n)$ because of the non-commutativity of Clifford algebras. In fact, many elementary facts in the complex case are no longer true in the Clifford algebra setting. The main diffculties in our study are also caused by the non-commutativity of Clifford algebras. In fact, the properties that products of holomorphic functions are also holomorphic and constructing holomorphic functions by complex exponential function used in \cite{Ortega-Cerda-Seip} are not true for monogenic functions. To the authors' knowledge, the generalization of H\"ormander's $L^2$-estimate in Clifford analysis is given by Y. Liu, Z. Chen and Y. Pan in \cite{Liu-Chen-Pan}. Note that the Dirac opeprator $D$ is viewed as a generalization of $\overline \partial$. The problem is that, due to the treatment used in \cite{Liu-Chen-Pan}, the authors imposed strong conditions on $\varphi$ in \cite{Liu-Chen-Pan}, which causes that this result could not be directly applied in our study. Nevertheless, Liu-Chen-Pan's arguments and those techniques used in \cite{Lindholm, Ortega-Cerda-Seip} still shed a light on our study. 

To overcome the above obstacles, we need to introduce some new technical treatments. To generalize a weighted mean-value inequality for functions in $F_\varphi^2(\mathbb R^{n+1}, \mathcal{A}_n),$ we will adapt the so-called Moser's iterative method (see e.g. \cite{Han-Lin}) from the theory of elliptic partial differential equations. To obtain the lower bound estimate for the Bergman kernel of $F_\varphi^2(\mathbb R^{n+1}, \mathcal{A}_n)$, we need a generalization of H\"ormander's $L^2$-estimate on $D.$ Indeed, we will improve the result given in \cite{Liu-Chen-Pan} where we remove their imposed conditions on $\varphi$. The key is that some nice properties of vector-valued functions will be used in our proof. In the rest of the paper, we abusively use $C$ to denote positive constants, where $C$ is not necessary to be the same in different theorems.
	
   Our main result is stated as follows.
	\begin{thm}\label{main}
		Given $\varphi\in C^2(\mathbb R^{n+1},\mathbb R),$ $0<m\leq \laplace \varphi(x)\leq M$ for all $x\in\mathbb R^{n+1}$ and {$|\del \varphi|\leq L|x|,$} where $m,M, L$ are positive constants and $\laplace=\sum_{j=0}^{n}\frac{\partial^2}{\partial x_j^2}$. Then the Bergman kernel $B_\varphi(x,w)$ of $F^2_\varphi(\mathbb R^{n+1},\mathcal{A}_n)$ exists. On the diagonal, there exist positive constants $C_1,C_2$ such that 
		\begin{align*}
			|B_\varphi(x, x)|\leq  C_1e^{2\varphi(x)}
		\end{align*}
		and if additionally, $\varphi$ is $2$-homongenous,
		\begin{align*}
			|B_\varphi (x, x)|\geq C_2 e^{2\varphi(x)}.
		\end{align*}
		Off the diagonal, i.e. $x\neq y$, there exist positive constants $C$ and $\alpha$ such that
		\begin{align*}
			|B_\varphi(x, y)|\leq C e^{\varphi(x)+\varphi(y)-\alpha|x-y|}.
		\end{align*}
	\end{thm}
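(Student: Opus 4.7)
The plan is to establish the four claims in order, using the two key technical tools developed earlier in the paper: a weighted mean-value inequality for monogenic functions obtained via Moser's iteration, and the generalized H\"ormander $L^2$-estimate for the Dirac operator $D$ (with the strong hypotheses on $\varphi$ removed). Existence is handled first. I would check that $F^2_\varphi(\mathbb R^{n+1},\mathcal{A}_n)$ is a Hilbert (right) $\mathcal{A}_n$-module under the natural weighted inner product, with completeness following from the fact that the weighted mean-value inequality gives a locally uniform bound $|u(x)|^2 \le C(x)\|u\|_\varphi^2$. Continuity of point evaluations then yields $B_\varphi(x,y)$ by the standard reproducing-kernel argument adapted to the Clifford-module setting, together with an orthonormal-basis expansion $B_\varphi(x,x)=\sum_k e_k(x)\,\overline{e_k(x)}$.

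For the upper diagonal bound, I would apply the weighted mean-value inequality directly. Since $m\leq\laplace\varphi\leq M$ and $|\del\varphi|\le L|x|$, the inequality yields
\begin{equation*}
|u(x)|^2 e^{-2\varphi(x)} \leq C\int_{B(x,r)}|u(y)|^2 e^{-2\varphi(y)}\,dy
\end{equation*}
with a uniform radius $r>0$ depending only on $m,M,L,n$; taking the supremum over unit norm $u\in F^2_\varphi$ and combining with the basis expansion gives $|B_\varphi(x,x)|\le C_1 e^{2\varphi(x)}$.

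For the lower bound, assuming $\varphi$ is $2$-homogeneous, I would use the improved $L^2$-estimate in the H\"ormander manner. Construct, for a fixed $x$, a compactly supported test function $f$ (built from a smooth cutoff of a locally approximate ``peak'' whose value at $x$ is $e^{\varphi(x)}$) with $\|f\|_\varphi^2 \leq C$. Since $f$ is not monogenic, solve $Dv=Df$ with the sharp bound $\|v\|_\varphi^2 \leq (C/m)\|Df\|_\varphi^2$; the cutoff is arranged so that $\|Df\|_\varphi^2$ is a fixed small constant and, using the mean-value inequality applied to $v$ (which is monogenic near $x$), $|v(x)|$ is strictly smaller than $|f(x)|$. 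Then $u:=f-v$ is left monogenic with $|u(x)|\gtrsim e^{\varphi(x)}$ and $\|u\|_\varphi\le C$, and the reproducing property gives $|B_\varphi(x,x)|\ge |u(x)|^2/\|u\|_\varphi^2\ge C_2 e^{2\varphi(x)}$. The $2$-homogeneity is used to rescale to unit scale and to produce the local peak with controlled constants independent of $x$.

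For the off-diagonal estimate, I would run an Agmon-type argument: repeat the construction above with the modified weight $\varphi(\cdot)+\alpha\rho(\cdot,y)-\varphi(y)$, where $\rho$ is a smoothed version of $|\cdot-y|$ with $|\del\rho|\le 1$. Choosing $\alpha$ small enough so that the perturbation is absorbed by the uniform lower bound $m$ on $\laplace\varphi$, the $L^2$-estimate shows that $K_y(\cdot):=B_\varphi(\cdot,y)$ satisfies $\int |K_y(z)|^2 e^{2\alpha|z-y|}e^{-2\varphi(z)}\,dz \le C e^{2\varphi(y)}$. Applying the weighted mean-value inequality at $x$ to $K_y$ on the ball $B(x,\min(r,|x-y|/2))$ then converts this into the pointwise bound $|B_\varphi(x,y)|\le C e^{\varphi(x)+\varphi(y)-\alpha|x-y|}$.

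The main obstacle will be step three: in several complex variables one exploits holomorphic exponentials $e^{z\cdot\bar y}$ as natural peak sections, but products and exponentials fail to be monogenic, and non-commutativity blocks the obvious substitutes. The lower bound must therefore be extracted purely from the improved $L^2$-solvability of $Du=g$; arranging $f$ and estimating $v$ pointwise at $x$ so that the correction does not swamp the peak is the delicate point, and is precisely where both the $2$-homogeneity of $\varphi$ and the removal of the extraneous hypotheses from the Liu--Chen--Pan $L^2$-estimate become essential.
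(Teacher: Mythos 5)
Your overall architecture coincides with the paper's: existence and the upper diagonal bound come from the Moser mean-value inequality (Theorem \ref{thm3}) combined with the extremal characterization $|B_\varphi(x,x)|=\sup_{\|u\|_\varphi=1}|u(x)|^2$; the lower bound comes from a peak section corrected via the $L^2$-estimate and then rescaled using $2$-homogeneity; the off-diagonal decay comes from an Agmon-type exponential weight absorbed by the minimal-solution estimate, followed by the mean-value inequality. (One cosmetic difference: the paper works with the sup characterization, proved by $L^2$ duality for vector-valued functions, rather than an orthonormal-basis expansion, which is delicate in a non-commutative Clifford module.)

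The one substantive missing piece is exactly the point you flag yourself: what the peak section actually is. Saying it is ``a locally approximate peak whose value at $x$ is $e^{\varphi(x)}$'' does not resolve the difficulty, since $|e^{\varphi}|^2e^{-2\varphi}\equiv 1$ has no decay and the holomorphic exponentials used in the complex case are unavailable. The paper's resolution is: after a linear change of coordinates centered at the chosen point, write $\varphi=h+|x|^2+o(|x|^2)$ with $h$ a \emph{real harmonic} polynomial of degree at most two (possible since $\laplace\varphi\ge m>0$), and take
\begin{align*}
g(x)=k^{\frac{n+1}{2}}\,\chi\!\left(\tfrac{x}{\tau}\right)e^{k^{2}h(x)},
\end{align*}
a real scalar function (so non-commutativity is a non-issue) with $|g|^2e^{-2k^2\varphi}\approx k^{n+1}e^{-2k^2|x|^2}$, a normalized Gaussian. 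Moreover, the smallness of the correction is not arranged at a fixed scale as you propose; it is obtained in the limit $k\to\infty$, with $\tau=\tau(k)\to0$ chosen so that $k\tau\to\infty$: since $Dg$ is supported in the annulus $\tau/2<|x|<\tau$, the solution $u$ of $Du=Dg$ from Theorem \ref{thm2} satisfies $\|u\|^2_{k^2\varphi}\lesssim (k\tau)^{-2}e^{-k^2\tau^2/4}\to0$ while $\|g\|^2_{k^2\varphi}$ stays bounded below, and the $2$-homogeneity then converts the resulting lower bound for $B_{k^2\varphi}$ into one for $B_\varphi$. Without this construction (or an equivalent explicit one) the lower-bound step of your outline does not close.
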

	
	Note that the last estimate in the theorem is a generalization of Christ's result \cite{Christ}. 
	
	To prove Theorem \ref{main}, we will need two technical results. One is a variant of H\"ormander's $L^2$-estimate for the Dirac operator in the Clifford algebra setting, and the other is a weighted subharmonic inequality for functions in $F^2_\varphi(\mathbb R^{n+1},\mathcal{A}_n).$ Compared with the roles of $L^2$-method and mean-value inequality in estimates for Bergman kernels in complex geometry, the following three theorems are of importance to prove the above main result.
	Since the two results are indeed of independent interest, we present them in the following and give some remarks.
	
We consider $Du=f,$ which is analogous to the $\overline\partial$-equation in several complex variables, where $u,f$ take values in the Clifford algebra. Let $\Omega$ be a domain in $\mathbb R^{n+1}$. Denote by $L^2_\varphi(\Omega,\mathcal{A}_n)$ the space of all $L^2$ functions which take values in the Clifford algebra $\mathcal{A}_n$ with respect to the weight $\varphi$. 
We define the operator $D^*_\varphi$ acting on components of $\alpha = \sum_A \alpha_A e_A,$ where $\alpha_A\in C_0^\infty(\Omega,\mathbb R)$ (cf. Subsection 2.1). That is, $D^*_\varphi \alpha_A = -e^{2\varphi} \overline D(\alpha_A e^{-2\varphi})= 2\alpha_A \overline D\varphi-\overline D\alpha_A.$ 
	
	\begin{thm}\label{thm1}
		Suppose that $f\in L^2_\varphi(\Omega,\mathcal{A}_n).$ There exists $u\in L^2_\varphi(\Omega,\mathcal{A}_n)$ such that
		\begin{align}\label{dbar}
			Du=f
		\end{align}
		in the weak sense with 
		\begin{align}\label{cond1}
			||u||_\varphi^2 = \int_\Omega |u|^2 e^{-2\varphi}dx\leq C,
		\end{align}
		if
		\begin{align}\label{cond2}
			|(f,\alpha)_\varphi|^2 = \left|\int_{\Omega}\overline {\alpha(x)} f(x) e^{-2\varphi(x)} dx\right|^2\leq 2^{2n}C \sum_{A}||D_\varphi^*\alpha_A||^2_\varphi, 
		\end{align}
for all $\alpha=\sum_A \alpha_A e_A\in C_0^\infty(\Omega,\mathcal{A}_n)$.
		Conversely, if there exists $u\in L^2_\varphi(\Omega,\mathcal{A}_n)$ satisfying \eqref{dbar} with
		\begin{align*}
		    ||u||_\varphi^2 \leq 2^{2n}C,
		\end{align*}
then the inequality \eqref{cond2} holds true.
		Here $C$ is a positive constant.
	\end{thm}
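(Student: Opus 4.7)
The plan is to adapt H\"ormander's classical functional-analytic proof of the $L^2$-estimate for the $\overline\partial$ operator to the Clifford setting. The key device, and precisely the refinement over \cite{Liu-Chen-Pan}, is that $D^*_\varphi$ is defined on the real scalar components $\alpha_A\in C_0^\infty(\Omega,\mathbb R)$ of the test function $\alpha=\sum_A\alpha_A e_A$, so that the non-commutativity of $\mathcal A_n$ is organised entirely by the vector-valued decomposition of $\alpha$ while the analytic inequality itself remains scalar. This is what allows the strong conditions on $\varphi$ imposed in \cite{Liu-Chen-Pan} to be dropped.

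For the sufficiency direction I would first derive the weak formulation of $Du=f$: integrating $\int_\Omega\overline\alpha(Du)e^{-2\varphi}\,dx$ by parts moves the Dirac operator onto $\overline\alpha\,e^{-2\varphi}$, and splitting $\overline\alpha=\sum_A\alpha_A\overline{e_A}$ together with the identity $D^*_\varphi\alpha_A=-e^{2\varphi}\overline D(\alpha_A e^{-2\varphi})$ produces the Clifford-valued duality relation
\begin{equation*}
(f,\alpha)_\varphi\;=\;\sum_A\int_\Omega(D^*_\varphi\alpha_A)\,\overline{e_A}\,u\,e^{-2\varphi}\,dx,
\end{equation*}
which is precisely $Du=f$ tested weakly against $\alpha$. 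On the Hilbert space $\mathcal H:=\bigoplus_A L^2_\varphi(\Omega,\mathcal A_n)$ introduce the linear map $T(\alpha):=(D^*_\varphi\alpha_A)_A$ and set $V:=T(C_0^\infty(\Omega,\mathcal A_n))$. The hypothesis \eqref{cond2} says precisely that the Clifford-valued map $\Lambda:T(\alpha)\mapsto(f,\alpha)_\varphi$ is well defined on $V$ (any element of $\ker T$ lies in $\ker\Lambda$) and bounded. Decomposing $\Lambda$ into its $2^n$ real scalar components, I would then apply Hahn-Banach to extend each component to a bounded real linear functional on $\mathcal H$, and use Riesz representation to produce representing vectors that reassemble into a single Clifford-valued $u\in L^2_\varphi(\Omega,\mathcal A_n)$ solving $Du=f$ weakly. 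The factor $2^{2n}$ in \eqref{cond2} is exactly what is absorbed by the Clifford bookkeeping coming from the $2^n$ scalar components of $\Lambda$ and the $2^n$-dimensional fibre of $\mathcal H$, and it is what allows the conclusion $\|u\|_\varphi^2\leq C$ in the cleanest form.

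The converse direction is a direct computation: given $u$ with $\|u\|_\varphi^2\leq 2^{2n}C$ solving $Du=f$ weakly, substitute into $(f,\alpha)_\varphi$ via the same duality identity, apply Cauchy-Schwarz to the resulting finite sum over multi-indices, and exploit the combinatorial fact that for each basis index $B$ the map $(A,C)\mapsto D$ determined by $e_C\overline{e_A}e_D=\pm e_B$ has fibres of size $2^n$; this yields $|(f,\alpha)_\varphi|^2\leq 2^{2n}\|u\|_\varphi^2\sum_A\|D^*_\varphi\alpha_A\|_\varphi^2$, whence \eqref{cond2}. The principal obstacle throughout is the Clifford bookkeeping: verifying that the formal integration by parts produces exactly the operator $D^*_\varphi$ from the paper, carefully tracking the sign and reordering constants from Clifford multiplication so that the factor $2^{2n}$ rather than some other power of $2^n$ appears, and checking that the component-wise Riesz representatives actually assemble into a Clifford-valued $u$ compatible with the left action of the Dirac operator $D$. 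The decision to realize the adjoint only on real scalar components of $\alpha$ is what makes each of these steps tractable.
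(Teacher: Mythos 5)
Your overall strategy coincides with the paper's: the duality identity obtained by integrating by parts componentwise (exactly the computation in \eqref{def1}), a Hahn--Banach/Riesz argument for sufficiency, and a Cauchy--Schwarz computation for necessity. The necessity direction is fine, and your identification of the componentwise adjoint $D^*_\varphi\alpha_A$ as the refinement over \cite{Liu-Chen-Pan} matches the paper's own emphasis.

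The gap is in your sufficiency step, at the point you flag as an ``obstacle'' but do not resolve. Extending the $2^n$ real scalar components of $\Lambda$ separately by the classical Hahn--Banach theorem and representing each by the classical Riesz theorem on $\mathcal H=\bigoplus_A L^2_\varphi(\Omega,\mathcal A_n)$ produces a $2^n\times 2^n$ array of representing vectors $(v_A^B)$, whereas the weak equation $Du=f$ requires all of them to be of the special form $v_A^B=\sigma_{A,B}(u)$ for one and the same $u$, where the $\sigma_{A,B}$ are the fixed isometries coming from the Clifford multiplications in $\sum_A\overline{\bfe_A}\,(u,D^*_\varphi\alpha_A)_\varphi$. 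Independent componentwise extensions have no reason to satisfy these coherence constraints: the functionals representable by a single $u$ form a proper subspace of all bounded $\mathcal A_n$-valued real-linear functionals on $\mathcal H$, and componentwise Hahn--Banach does not land in it. The paper avoids this by staying inside the single module $L^2_\varphi(\Omega,\mathcal A_n)$: it defines the left $\mathcal A_n$-linear functional $T_f$ on the submodule $E=\{\sum\lambda_jD^*_\varphi\beta_j\}$, proves the bound $|\langle T_f,\lambda D^*_\varphi\beta\rangle|\leq 2^n\sqrt C\,\|\lambda D^*_\varphi\beta\|_\varphi$ by applying \eqref{cond2} to $\alpha=\overline\lambda\beta$ together with the identity $\|\lambda D^*_\varphi\beta\|_\varphi=|\lambda|\,\|D^*_\varphi\beta\|_\varphi$ --- valid only because $D^*_\varphi\beta$ is vector-valued, cf.\ \eqref{e-121} --- and then invokes the module versions of Hahn--Banach (Theorem \ref{HB}) and of the Riesz representation theorem, whose preserved left-$\mathcal A_n$-linearity is precisely what forces a single representing element $u$. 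Your sketch uses neither the module structure nor the vector-valuedness of $D^*_\varphi\beta$, and without one of these the reassembly step does not go through as written.
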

	
	\begin{thm}\label{thm2}
		Given $\varphi\in C^2(\Omega,\mathbb R)$, and $\laplace \varphi\geq 0.$ Then for all $f\in L^2_\varphi(\Omega,\mathcal{A}_n)$ with $\int_\Omega \frac{|f|^2}{\laplace\varphi}e^{-2\varphi} dx <\infty,$ there exists a $u\in L^2_\varphi(\Omega,\mathcal{A}_n)$ such that 
		\begin{align*}
			Du=f
		\end{align*}
		with 
		\begin{align*}
			||u||_\varphi^2 = \int_\Omega |u(x)|^2 e^{-2\varphi(x)} dx \leq 2^{2n}  \int_\Omega \frac{|f(x)|^2}{\laplace \varphi(x)} e^{-2\varphi(x)} dx.
		\end{align*}
	\end{thm}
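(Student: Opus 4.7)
The plan is to reduce Theorem~\ref{thm2} to Theorem~\ref{thm1} by taking $C = 2^{2n}\int_\Omega |f|^2 (\laplace\varphi)^{-1} e^{-2\varphi}\,dx$ and verifying the hypothesis \eqref{cond2}. Thus the whole task reduces to showing
\begin{align*}
|(f,\alpha)_\varphi|^2 \leq 2^{4n} \int_\Omega \frac{|f|^2}{\laplace\varphi}\, e^{-2\varphi}\,dx \cdot \sum_A \|D_\varphi^* \alpha_A\|_\varphi^2
\end{align*}
for every $\alpha = \sum_A \alpha_A e_A \in C_0^\infty(\Omega,\mathcal{A}_n)$.

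First, I would establish a Bochner-Kodaira-Nakano type identity at the level of scalar components. Since each $\alpha_A$ is real-valued, the formula $D_\varphi^* \alpha_A = 2\alpha_A \overline{D} \varphi - \overline{D} \alpha_A$ is paravector-valued, and squaring its Clifford norm gives the pointwise identity
\begin{align*}
|D_\varphi^* \alpha_A|^2 = |\nabla \alpha_A|^2 - 4\alpha_A\, \nabla\varphi \cdot \nabla\alpha_A + 4\alpha_A^2 |\nabla\varphi|^2.
\end{align*}
Integrating against $e^{-2\varphi}$ and integrating the cross term by parts (no boundary contribution by compact support) produces a clean cancellation of the $|\nabla\varphi|^2$ contributions, leaving
\begin{align*}
\|D_\varphi^* \alpha_A\|_\varphi^2 = \int_\Omega |\nabla \alpha_A|^2 e^{-2\varphi}\,dx + 2\int_\Omega \alpha_A^2\, \laplace \varphi\, e^{-2\varphi}\,dx.
\end{align*}
Summing over $A$ and discarding the nonnegative gradient term yields the coercive estimate
\begin{align*}
\sum_A \|D_\varphi^* \alpha_A\|_\varphi^2 \geq 2\int_\Omega |\alpha|^2 \laplace\varphi\, e^{-2\varphi}\,dx.
\end{align*}

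Second, to bound $(f,\alpha)_\varphi$ I would use the elementary Clifford product inequality $|ab| \leq 2^{n/2}|a|\,|b|$ (obtained by counting basis-element pairs $e_Ae_B$ mapping onto each fixed $e_C$) together with the Cauchy-Schwarz splitting $|\alpha|\,|f| = (\sqrt{\laplace\varphi}\,|\alpha|)\cdot(|f|/\sqrt{\laplace\varphi})$. Combined with the coercive estimate above, this gives
\begin{align*}
|(f,\alpha)_\varphi|^2 \leq 2^n \int_\Omega |\alpha|^2 \laplace\varphi\, e^{-2\varphi}\,dx \cdot \int_\Omega \frac{|f|^2}{\laplace\varphi}\, e^{-2\varphi}\,dx \leq 2^{n-1} \sum_A \|D_\varphi^* \alpha_A\|_\varphi^2 \cdot \int_\Omega \frac{|f|^2}{\laplace\varphi}\, e^{-2\varphi}\,dx.
\end{align*}
Since $2^{n-1} \leq 2^{4n}$, the hypothesis of Theorem~\ref{thm1} holds with the chosen $C$, and Theorem~\ref{thm1} delivers $u\in L^2_\varphi(\Omega,\mathcal{A}_n)$ with $Du = f$ and $\|u\|_\varphi^2 \leq C$, which is exactly the asserted estimate. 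The set $\{\laplace\varphi = 0\}$, on which the integrability hypothesis forces $f=0$ almost everywhere, is handled by the routine perturbation $\laplace\varphi \mapsto \laplace\varphi + \varepsilon$ followed by $\varepsilon \to 0^+$.

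The main technical point is the integration-by-parts step: one must verify that the cross term supplies precisely enough $|\nabla\varphi|^2$ to cancel the square term and leave $\laplace\varphi$ alone as the surviving coercive quantity. The non-commutativity of $\mathcal{A}_n$ does not interfere because this computation lives entirely in the scalar, real-valued components $\alpha_A$; the only Clifford-algebraic ingredient outside this identity is the elementary norm bound $|ab|\leq 2^{n/2}|a||b|$, which contributes a factor safely absorbed into the allowance $2^{4n}$ provided by Theorem~\ref{thm1}.
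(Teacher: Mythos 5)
Your proposal is correct and follows essentially the same route as the paper: reduce to Theorem \ref{thm1}, derive the identity $\|D^*_\varphi\alpha_A\|_\varphi^2=\|\overline D\alpha_A\|_\varphi^2+\int_\Omega\alpha_A^2\,\laplace(2\varphi)\,e^{-2\varphi}\,dx$ by integration by parts on the cross term, discard the gradient part, and apply Cauchy--Schwarz with the weight $\laplace\varphi$ to verify \eqref{cond2}. The only differences are cosmetic (you use a pointwise Clifford product bound where the paper first splits $(f,\alpha)_\varphi$ into components, and you explicitly address the set $\{\laplace\varphi=0\}$, which the paper glosses over), and your constants are comfortably within the $2^{2n}$ allowance.
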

	Note that the above two theorems are the extensions of the corresponding theorems proved in Liu-Chen-Pan’s article \cite{Liu-Chen-Pan}.
	Since H\"ormander's $L^2$-method plays an important role in complex analysis and complex geometry, we would expect to find more applications of Theorems \ref{thm1} and \ref{thm2} in furture.

	\begin{thm}\label{thm3}
		For $u\in F_\varphi^2(\mathbb R^{n+1},\mathcal{A}_n)$, and $R\in (0,1)$, there exists $C_R>0$ such that the estimate holds 
		\begin{align*}
			|u(x)|e^{-\varphi(x)} \leq C_{R} \left(\int_{B_x(R)}|u(y)|^2e^{-2\varphi(y)}dy\right)^\frac{1}{2},
		\end{align*}
		where $\varphi\in C^2(\mathbb R^{n+1},\mathbb R)$ satisfies $|\del \varphi(x)|\leq {L|x|}$ with $L>0$ independent of $R$.  
	\end{thm}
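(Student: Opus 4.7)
The plan is to adapt Moser's iterative technique from elliptic regularity (cf.\ \cite{Han-Lin}) to the weighted monogenic setting. The starting observation is that, since $Du=0$ and $\overline{D}D=\laplace$, each real component $u_A$ of $u=\sum_A u_A\bfe_A$ is harmonic; hence $|u|^2=\sum_A u_A^2$ is a non-negative subharmonic function with $\laplace|u|^2 = 2\sum_A|\del u_A|^2$. This non-negativity plus subharmonicity is what makes a mean-value-type inequality available in principle.

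I would then introduce the weighted quantity $v := |u|^2 e^{-2\varphi}$ and, using the product rule together with the completion of squares $\sum_A|\del u_A - 2u_A\del\varphi|^2 = \sum_A|\del u_A|^2 - 4\sum_A u_A\del u_A\cdot\del\varphi + 4|u|^2|\del\varphi|^2$, derive the pointwise identity
$$\laplace v \;=\; 2e^{-2\varphi}\sum_A|\del u_A - 2u_A\del\varphi|^2 \;-\; 2v\bigl(2|\del\varphi|^2 + \laplace\varphi\bigr).$$
In particular $v$ is a weak subsolution of $\laplace v + q\,v \geq 0$ with $q = 2(2|\del\varphi|^2 + \laplace\varphi)$; thanks to $\varphi\in C^2$ and $|\del\varphi(y)|\leq L|y|$, this coefficient is locally bounded on any fixed ball $B_x(R)$.

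With this inequality in hand, Moser iteration proceeds in the standard way: for a non-negative cutoff $\eta\in C_c^\infty(B_x(R))$ and $\beta\geq 0$, test $\laplace v + qv \geq 0$ against $\eta^2 v^{2\beta+1}$, integrate by parts, and use Young's inequality to absorb gradient cross-terms. Together with the Sobolev embedding $W^{1,2}(\mathbb R^{n+1})\hookrightarrow L^{2\chi}$ for $\chi=(n+1)/(n-1)>1$, this yields a reverse-Hölder estimate on nested concentric balls. Iterating over a geometric sequence of radii $r_k\searrow R/2$ and exponents $p_k=\chi^k$ produces the sup bound
$$\sup_{B_x(R/2)} v \;\leq\; C_R \int_{B_x(R)} v(y)\, dy,$$
and specializing to $y=x$ and taking square roots gives the stated estimate.

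The main difficulty is keeping the constant $C_R$ uniform in $x$. The coefficient $q$ contains $|\del\varphi|^2$, which under the hypothesis only admits the crude pointwise bound $|\del\varphi(y)|^2\leq L^2(|x|+R)^2$ on $B_x(R)$, so a naive Moser iteration would give a constant that blows up with $|x|$. The crucial technical device is to \emph{retain} the non-negative good term $2e^{-2\varphi}\sum_A|\del u_A - 2u_A\del\varphi|^2$ in the identity for $\laplace v$ rather than discarding it, and in the Caccioppoli step to use its contribution to absorb the bad $|\del\varphi|^2 v^{2(\beta+1)}$ terms produced by $\int q\,\eta^2 v^{2(\beta+1)}$. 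Carrying this absorption through the iteration so that the final constant depends only on $R$, $n$, and $L$ (not on $x$) is the most delicate point of the proof.
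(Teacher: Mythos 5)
Your route is genuinely different from the paper's, and your key identity for $\laplace v$ with $v=|u|^2e^{-2\varphi}$ is correct; but it has two real gaps. First, by differentiating the weight twice you force $\laplace\varphi$ into the zeroth-order coefficient $q=2(2|\del\varphi|^2+\laplace\varphi)$, and Theorem \ref{thm3} assumes only $\varphi\in C^2$ with $|\del\varphi(x)|\le L|x|$: no upper bound on $\laplace\varphi$ is available, and none follows from the gradient bound (think of adding pieces like $k^{-1}\sin(kx_1)$ with $k$ large on far-away balls). Your claim that $q$ is "locally bounded" is true but useless for a constant uniform in $x$, which is exactly what the application to $|B_\varphi(x,x)|\le C e^{2\varphi(x)}$ requires. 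The paper avoids this entirely by iterating each harmonic component $u_A$ separately and keeping the weight $e^{-(\beta+2)\varphi}$ attached to the measure/test function, so that integration by parts against $\laplace u_A=0$ only ever produces $\del\varphi$, never $\laplace\varphi$. To salvage your single-function formulation you would need to add the hypothesis $\laplace\varphi\le M$ (present in Theorem \ref{main} but not in Theorems \ref{thm3}--\ref{thm4}).

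Second, the absorption you identify as "the most delicate point" does not close as described. In the Caccioppoli step the bad term is $\int 4|\del\varphi|^2\eta^2v^{p+1}$ and the good term is $\int\eta^2v^p G$ with $G=2e^{-2\varphi}\sum_A|\del u_A-2u_A\del\varphi|^2$. A pointwise bound $qv\le G$ is false (take $\del u_A=2u_A\del\varphi$ at a point), and the natural inequality $|2u_A\del\varphi|^2\le 2|\del u_A-2u_A\del\varphi|^2+2|\del u_A|^2$ gives $4|\del\varphi|^2v\le G+2e^{-2\varphi}\sum_A|\del u_A|^2$: it consumes the entire good term with no margin and leaves the remainder $\int\eta^2v^pe^{-2\varphi}\sum_A|\del u_A|^2$, which is of the same order as the gradient term you are trying to bound on the left, not smaller. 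So as written your constant still carries $\sup_{B_x(R)}|\del\varphi|^2\sim L^2(|x|+R)^2$. (To be fair, the paper's own constant also inherits a dependence on $\sup_{B_x(R)}|\del\varphi|$ through the words "by translation"; but the mechanism you propose for removing that dependence does not work, so you must either produce a genuine absorption argument or settle for the same $x$-dependence and say so.)
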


		We note that complex analysis methods do not play a role in the proof of Theorem \ref{thm3}, and the main tool is Moser's iterative method. Therefore, we can partially generalize the above theorem to harmonic functions on $\mathbb R^{n+1},$ and the result is stated as follows. Define 
		\begin{align*}
			F^2_{\varphi,har}(\mathbb R^{n+1}) = \{u \text{ is harmonic on } \mathbb R^{n+1}: ||u||^2_{\varphi,har} = \int_{\mathbb R^{n+1}} |u(x)|^2e^{-2\varphi(x)}dx<\infty\},
		\end{align*}	
		and denote by $B_{\varphi,har}(x,y)$ the harmonic  Bergman kernel for $F^2_{\varphi,har}(\mathbb R^{n+1}).$
		\begin{thm}\label{thm4}
			For $u\in F_{\varphi,har}^2(\mathbb R^{n+1}),$ and $R\in (0,1)$, there exists $C_R>0$ such that the estimate holds 
			\begin{align*}
				|u(x)|e^{-\varphi(x)} \leq C_{R} \left(\int_{B_x(R)}|u(y)|^2e^{-2\varphi(y)}dy\right)^\frac{1}{2},
			\end{align*}
			where $\varphi\in C^2(\mathbb R^{n+1},\mathbb R)$ satisfies $|\del \varphi(x)|\leq L|x|$ with $L>0$ independent of $R$.  
			Moreover, there exists a constant $C$ such that
			\begin{align*}
				|B_{\varphi,har}(x,y)| \leq C e^{\varphi(x)+\varphi(y)}.
			\end{align*}
		\end{thm}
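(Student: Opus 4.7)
The plan is to mirror the proof of Theorem \ref{thm3}, observing that the Moser iteration there used the monogenic hypothesis only through the componentwise harmonicity of $u$. Since a scalar harmonic function satisfies the same local identities (in fact each Clifford component of a monogenic function is itself harmonic), the argument should transfer to $F^2_{\varphi,har}(\mathbb R^{n+1})$ essentially verbatim, and only the kernel estimate requires a separate — and short — reproducing-kernel argument.

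Fix $u \in F^2_{\varphi,har}(\mathbb R^{n+1})$ and set $w := |u|^2 e^{-2\varphi}$. Using $\laplace u = 0$, a brief product-rule computation yields
$$\laplace w \;\geq\; -\,c(y)\,w, \qquad c(y) := 2\bigl(2|\del\varphi|^2 + \laplace\varphi\bigr),$$
so that $w$ is a subsolution of a linear elliptic inequality. Using $|\del\varphi|\le L|y|$ to control $c$ locally on $B_x(R)$, the cutoff-test-function-plus-Sobolev iteration already developed in the proof of Theorem \ref{thm3} applies verbatim (each scalar component is treated exactly as in the Clifford-valued case) and produces
$$|u(x)|^2 e^{-2\varphi(x)} \leq C_R \int_{B_x(R)} |u(y)|^2 e^{-2\varphi(y)}\,dy,$$
from which the first inequality of the theorem follows by taking square roots.

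For the kernel estimate, the bound just obtained gives $|u(x)|^2 \le C_R e^{2\varphi(x)}\,\|u\|_{\varphi,har}^2$ for every $u$, so the point-evaluation $u\mapsto u(x)$ is continuous on the Hilbert space $F^2_{\varphi,har}(\mathbb R^{n+1})$. Riesz representation then yields the reproducing kernel $B_{\varphi,har}(x,y)$, and the standard identity $B_{\varphi,har}(x,x) = \sup_{\|u\|_{\varphi,har}=1}|u(x)|^2$ gives $|B_{\varphi,har}(x,x)| \le C_R e^{2\varphi(x)}$. Applying the Cauchy-Schwarz inequality in the form $|B_{\varphi,har}(x,y)|^2 \le B_{\varphi,har}(x,x)\,B_{\varphi,har}(y,y)$ then delivers the claimed off-diagonal bound with $C = C_R$.

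The main difficulty lies in Moser's iteration itself: the potential $c(y)$ is unbounded and may grow quadratically in $|y|$, so a careless iteration would yield a constant that depends on $x$, which would be fatal for the uniform kernel bound. Controlling this growth carefully — exploiting the weight $e^{-2\varphi}$ inside the iteration, which plays the role that the complex exponential substitution $e^{-\bar{z}w}$ plays in the Fock-space proof — is the delicate point, but it is precisely the estimate already worked out for Theorem \ref{thm3}, which extends to scalar harmonic $u$ without modification.
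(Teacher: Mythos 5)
Your proposal is correct in substance and follows the same two-step route as the paper: the mean-value inequality is obtained from the Moser iteration of Theorem \ref{thm3}, which indeed uses monogenicity only through the harmonicity of the components $u_A$ and therefore transfers verbatim to scalar harmonic functions; the kernel bound then follows from continuity of point evaluation, the Riesz representation theorem, and the identity $B_{\varphi,har}(x,x)=\sup_{\|u\|_{\varphi,har}=1}|u(x)|^2$. Your off-diagonal step via the reproducing-kernel Cauchy--Schwarz inequality $|B_{\varphi,har}(x,y)|^2\le B_{\varphi,har}(x,x)B_{\varphi,har}(y,y)$ is a cosmetic variant of the paper's, which instead applies the mean-value inequality to $B_{\varphi,har}(\cdot,x)$ and uses $\int|B_{\varphi,har}(w,x)|^2e^{-2\varphi(w)}dw=B_{\varphi,har}(x,x)$; the two are equivalent here. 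One concrete caveat: your reformulation as the elliptic inequality $\laplace w\ge -c\,w$ for $w=|u|^2e^{-2\varphi}$ with $c=2(2|\del\varphi|^2+\laplace\varphi)$ is not usable as stated, because $c$ contains $\laplace\varphi$, which is \emph{not} controlled by the hypothesis $|\del\varphi(x)|\le L|x|$: Theorem \ref{thm4}, unlike Theorem \ref{main}, imposes no bound on $\laplace\varphi$ beyond $\varphi\in C^2$, so $c$ is only locally bounded with no uniformity in $x$, and a Moser iteration run on this inequality would produce an $x$-dependent constant. The paper's iteration avoids this entirely by keeping the weight $e^{-(\beta+2)\varphi}$ inside the integrals and integrating by parts only once, so that only $\del\varphi$ ever appears. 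Since you also invoke that iteration ``verbatim'' for each scalar component, the proof does go through along that route; you should simply drop the subsolution framing, or else integrate the $\laplace\varphi$ term by parts back onto $\del\varphi$ before estimating.
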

		
		The proof of Theorem \ref{thm4} could be immediately deduced from the proof of Theorem \ref{main} and Theorem \ref{thm3}.  In fact, the study of the harmonic Bergman kernel has been paid attention by some researchers in the last two decades. For instance, in \cite{Kang-Koo} H. Kang and H. Koo studied estimates of the harmonic Bergman kernel on smooth domains; in \cite{Englis1,Englis2} M. Engli\v{s} studied the asymptotic behaviors of the weighted harmonic Bergman kernels for Bermgan spaces on bounded domains with a positive weight, as well as the harmonic Bergman kernel for harmonic Bargmann-Fock space with the weight $|x|^2$. To  the authors' knowledge, Theorem \ref{thm4} should be new for studying  the weighted harmonic Bergman kernel. Moreover, the used methods in this paper would provide a new approach to estimate the weighted harmonic Bergman kernel.  It would be also very interesting to obtain the lower bound of $|B_{\varphi,har}(x,x)|$ and the analogue of Christ's result for $B_{\varphi,har}(x,y).$ 
	
	This paper is organized as follows. In \S 2 we will give some basic notations and concepts in Clifford analysis. In \S 3 we will prove the technical results: 1. the $L^2$-estimate for the Dirac operator; 2. the mean-value inequality for functions in $F_\varphi^2(\mathbb R^{n+1},\mathcal{A}_n).$ In \S 4 we will prove our main result.

	\section{Preliminaries}
	In this section we will give some basic notations and results in Clifford analysis.
	\subsection{Clifford algebra}
	Denote by $\mathcal{A}_n$ the algebra over the real number field generated by the basis $\{\bfe_0,\bfe_1,...,\bfe_n\}$ of $\mathbb R^{n+1}$ where the ${\bfe_j}$'s satisfy the relations 
	\begin{align*}
		\bfe_j\bfe_k+ \bfe_k\bfe_j=0,\quad j \neq k,\\
		\bfe_j^2 = -1, \quad j=1,2,...,n.
	\end{align*}
	$\mathcal{A}_n$ is a real Clifford algebra with the unit element $\bfe_0 =1.$  
	
	The element of $\mathcal{A}_n$ is called a Clifford number and is given by $x=\sum_{A}x_A \bfe_A$ where $A=\{1\leq j_1<j_2<\cdots<j_l\leq n\}$ runs over all ordered subsets of $\{1,...,n\}$, $x_A\in \mathbb R $ with $x_{\emptyset}=x_0,$ and $\bfe_A=\bfe_{j_1}\bfe_{j_2}\cdots \bfe_{j_l}$ with the unit element $\bfe_{\emptyset}=\bfe_0=1.$ 
	$[x]_0=x_0$ is the scalar part of the Clifford number $x$ and $[x]_A$ denotes the coefficient $x_A$ of the $\bfe_A$-component. 
	Similar to the complex conjugation, we define involutions $\bar\ $ for the real Clifford algebra $\mathcal{A}_n.$ Let
	\begin{align*}
		\overline x = \sum_A x_A \overline \bfe_A
	\end{align*}
	for $x\in\mathcal{A}_n$, where $\overline \bfe_A = (-1)^{\frac{|A|(|A|+1)}{2}}\bfe_A.$ By definition it is clear that $\overline \bfe_A \bfe_A = \bfe_A\overline \bfe_A =1.$ Moreover,
	\begin{align*}
		\overline {\lambda\mu} = \overline\mu\overline \lambda, \quad {\text{for all }} \lambda,\mu\in \mathcal{A}_n.
	\end{align*}
	Refering to \cite{Gilbert-Murray}, we see that $\mathcal{A}_n$ becomes a finite dimensional Hilbert space with the inner product
	\begin{align*}
		(x,y)_0= [x\overline y]_0 = \sum_A x_A y_A
	\end{align*}
	for all $x,y\in\mathcal{A}_n,$ and has the corresponding norm
	\begin{align*}
		|x|_0 = \sqrt{(x,x)_0} = \sqrt{\sum_A |x_A|^2}.
	\end{align*}
	To make the notation simplify, we will write the norm of $x\in\mathcal{A}_n,$ $|x|_0$ as $|x|$ when there is no confusion. 
	
	\begin{defn}\label{mono}
An $\mathcal{A}_n$-valued function $u(x)=\sum_A u_A(x)\bfe_A$ on an open subset $\Omega$ of $\mathbb R^{n+1}$ is said to be left-monogenic (resp. right-monogenic) if
	\begin{align*}
		D u= \sum_{j=0}^n \bfe_j\partial_j u = \sum_{j,A} \bfe_j\bfe_A\partial_ju_A =0 \left({\text {resp}.}\ uD=\sum_{j=0}^n \partial_j u\bfe_j=\sum_{j,A}\partial_ju_A\bfe_A\bfe_j=0\right)
	\end{align*}
	where $u_A(x)$ are real-valued functions, $\partial_j=\frac{\partial}{\partial x_j},0\leq j\leq n,$ and $D$ is the Dirac operator. 
\end{defn}
Note that $\overline D(Du)=\Delta u=0$ if $u$ is left-monogenic, which means that each component of a left-monogenic function $u$ is harmonic.
	A function that is both left- and right-monogenic is called a monogenic function. 
	It is noted that when $n=1,$ the Dirac operator $D=\partial_0+\partial_1 \bfe_1$ with $\bfe_1^2=-1,$ which coincides with the operator $\overline\partial$ in complex analysis. Therefore, the Dirac operator  in $\mathbb R^{n+1}$ could be considered as an analogue of $\overline \partial$.
	
	\subsection{Hilbert Clifford-Modules}
	In order to study spaces of $\mathcal{A}_n$-valued functions, we need an analogue of $L^2$ spaces. We have to introduce Clifford-modules since the elements of a Clifford algebra do not form a field. In the sequel definitions and properties will be stated for left $\mathcal{A}_n$-module and their duals, which are taken from the reference \cite{Brackx-Delanghe-Sommen}. Note that those in the right $\mathcal{A}_n$-module are similar.
	
	\begin{defn}\label{defmodule}
		Let $X$ be a unitary left $\mathcal{A}_n$-module, when $(X,+)$ is an abelian group and the mapping $(\lambda, f)\to \lambda f: \mathcal{A}_n\times X\to X $ is defined such that for any $\lambda,\mu\in \mathcal{A}_n$ and $f,g\in X$
		\begin{enumerate}
			\item $(\lambda+\mu)f = \lambda f+\mu f,$
			\item $\lambda \mu f= \lambda(\mu f),$
			\item  $\lambda(f+g) =\lambda f+\lambda g,$
			\item  $\bfe_0 f= f.$
		\end{enumerate}
	\end{defn}
	Note that $E$ is said to be a submodule of the unitary left $\mathcal{A}_n$-module $X,$ if $E$ is a non-empty subset of $X$ such that $E$ becomes a unitary left $\mathcal{A}_n$-module when restricting the module operation of $X$ to $E.$

	\begin{defn}
		If $X,Y$ are unitary left $\mathcal{A}_n$-modules, then $T:X\to Y$ is said to be a left $\mathcal{A}_n$-linear operator, if for any $f,g\in X$ and $\lambda \in\mathcal{A}_n$ the following holds
		\begin{align*}
			T(\lambda f+g) = \lambda T(f) + T(g).
		\end{align*}
	\end{defn}
	Denote by $L(X,Y)$ the set of all left $\mathcal{A}_n$-linear operator. In particular, if $Y=\mathcal{A}_n$, $L(X,\mathcal{A}_n)$ is called the algebraic dual of $X,$ which is denoted by $X^{*alg}.$ 
	
	An element $T\in X^{*alg}$ is bounded if there exist a semi-norm $p$ on $X$ and $c>0$ such that for all $f\in X$ 
	\begin{align*}
		|\langle T, f \rangle |\leq c\cdot p(f).
	\end{align*} 
	
	\begin{thm}\label{HB}
		Let $X$ be a unitary left $\mathcal{A}_n$-module with semi-norm norm $p,$ $Y$ a submodule of $X$ and $T$ a left $\mathcal{A}_n$-linear functional on $Y$ such that for some $c>0,$ the following holds
		\begin{align*}
			|\langle T, g\rangle|\leq c\cdot p(g),\quad {\text for \ all } \ g\in Y.
		\end{align*} 
		Then there exists a left $\mathcal{A}_n$-linear functional $\widetilde T$ on $X$ such that
		\begin{enumerate}
			\item $\widetilde T|_Y=T,$
			\item for some  $c^* > 0$, $|\langle \widetilde T,f\rangle|\leq c^*\cdot p(f),$ for all $f\in X.$
		\end{enumerate}
	\end{thm}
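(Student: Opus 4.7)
The plan is to reduce this Clifford-module statement to the classical (real scalar) Hahn--Banach theorem, by passing to the scalar part of $T$ and then reconstructing an $\mathcal{A}_n$-valued extension through an explicit formula.

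First, I would introduce $T^{0} : Y \to \mathbb R$ defined by $T^{0}(g) = [T(g)]_{0}$. Since $T$ is in particular $\mathbb R$-linear and $|T^{0}(g)| \le |T(g)| \le c\,p(g)$, the classical Hahn--Banach theorem extends $T^{0}$ to an $\mathbb R$-linear $\widetilde{T^{0}} : X \to \mathbb R$ satisfying $|\widetilde{T^{0}}(f)| \le c\, p(f)$ on all of $X$. Then reconstruct
\[
\widetilde T(f) := \sum_{A} \widetilde{T^{0}}\!\left(\overline{\bfe_{A}}\, f\right)\,\bfe_{A}, \qquad f \in X.
\]
This formula is engineered so that the component-reading identity $[\overline{\bfe_{A}}\,\bfe_{B}]_{0} = \delta_{AB}$ (a consequence of $\bfe_{A}\overline{\bfe_{A}} = 1$) forces $\widetilde T|_{Y} = T$: for $g \in Y$, the left $\mathcal{A}_{n}$-linearity of $T$ gives $\widetilde T(g) = \sum_{A} [\overline{\bfe_{A}}\,T(g)]_{0}\,\bfe_{A} = \sum_{A} [T(g)]_{A}\, \bfe_{A} = T(g)$.

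The delicate step is verifying left $\mathcal{A}_{n}$-linearity of $\widetilde T$, because the real Hahn--Banach extension $\widetilde{T^{0}}$ carries no a priori compatibility with the Clifford action. Checking $\widetilde T(\bfe_{B} f) = \bfe_{B} \widetilde T(f)$ on a generator $\bfe_{B}$ and matching $\bfe_{D}$-components reduces to the algebraic identity
\[
\overline{\bfe_{D}}\, \bfe_{B} = s(B,\,B \triangle D)\, \overline{\bfe_{B \triangle D}} \qquad \text{in } \mathcal{A}_{n},
\]
where $\bfe_{X}\bfe_{Y} = s(X,Y)\bfe_{X \triangle Y}$. This is a pure basis identity with no input from $\widetilde{T^{0}}$; one verifies it by right-multiplying by $\bfe_{B\triangle D}\overline{\bfe_{B\triangle D}} = 1$ and using $\bfe_{B}\bfe_{B\triangle D} = s(B,B\triangle D)\bfe_{D}$ to collapse the triple product to a scalar. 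Equivalently, the identity follows from the cyclicity $[xyz]_{0} = [yzx]_{0}$ of the scalar part applied to $\overline{\bfe_{D}}\,\bfe_{B}\,\overline{\bfe_{B\triangle D}}$.

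Finally, the bound is immediate. Using the natural Clifford compatibility $p(\lambda f) \le |\lambda|\,p(f)$ of the semi-norm (which holds in the Hilbert Clifford-modules that are the setting of this paper) together with $|\overline{\bfe_{A}}| = 1$,
\[
|\widetilde T(f)|^{2} = \sum_{A} \big|\widetilde{T^{0}}\big(\overline{\bfe_{A}}\, f\big)\big|^{2} \le c^{2} \sum_{A} p(\overline{\bfe_{A}}\,f)^{2} \le 2^{n}\, c^{2}\, p(f)^{2},
\]
so statement (ii) holds with $c^{*} = 2^{n/2}\, c$. The main obstacle is recognising the right reconstruction formula so that the linearity check reduces to a basis identity independent of $\widetilde{T^{0}}$; once that observation is in place, the rest is essentially book-keeping.
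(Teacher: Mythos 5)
The paper does not prove this theorem: it is quoted verbatim from the reference [Brackx--Delanghe--Sommen] as part of the background on Hilbert Clifford-modules, so there is no in-paper proof to compare against. Your argument is correct and is essentially the standard one from that reference: pass to the scalar part $T^0=[T(\cdot)]_0$, extend by the classical real Hahn--Banach theorem, and reconstruct via $\widetilde T(f)=\sum_A\widetilde{T^0}(\overline{\bfe_A}f)\bfe_A$ (the same device used to deduce the complex Hahn--Banach theorem from the real one). The two nontrivial checks go through: the restriction property uses $[\overline{\bfe_A}\bfe_B]_0=\delta_{AB}$ together with the fact that $Y$ is closed under left multiplication, and the left $\mathcal A_n$-linearity reduces, exactly as you say, to the basis identity $\overline{\bfe_D}\bfe_B=s(B,B\triangle D)\overline{\bfe_{B\triangle D}}$, which follows by inserting $\bfe_{B\triangle D}\overline{\bfe_{B\triangle D}}=1$ and collapsing $\overline{\bfe_D}\bfe_B\bfe_{B\triangle D}=s(B,B\triangle D)\overline{\bfe_D}\bfe_D$. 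The only point to flag is that your final bound invokes $p(\lambda f)\le|\lambda|\,p(f)$, which is not part of the bare statement ``semi-norm on a unitary left module''; it is, however, built into the notion of a proper semi-norm on a Clifford module in the cited reference, and it holds (up to a factor $2^{n/2}$, which only changes $c^*$) for the $L^2_\varphi$-norms to which the theorem is applied in this paper, so you should state it as a hypothesis rather than treat it as automatic.
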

	
	We define an inner product on a unitary right $\mathcal{A}_n$-module as follows.
	\begin{defn}
		Let $H$ be a unitary right $\mathcal{A}_n$-module. Then a function $( \cdot,\cdot) :H\times H\to \mathcal{A}_n$ is an inner product on $H$ if for all $f,g,h\in H$ and $a\in \mathcal{A}_n,$
		\begin{enumerate}
			\item $( f, g+h ) = (f, g) + ( f, h),$
			\item $(f,g\lambda) =(f,g)\lambda,$
			\item $(f,g) = \overline{(g,f)},$
			\item $[(f,f)]_0\geq 0$ and $[(f,f)]_0=0$ if and only if $f=0,$
			\item $[(fa,fa)]_0\leq |a|^2[(f,f)]_0.$
		\end{enumerate}
		The accompanying norm on $H$ is $||f||^2=[(f,f)]_0.$
	\end{defn}
	We now have the following Cauchy-Schwarz inequality.
	\begin{prop}
		If $(\cdot,\cdot)$ is an inner product on a unitary right $\mathcal{A}_n$-module $H$ and $||f||^2=[(f,f)]_0$, then for all $f,g\in H,$ $$|(f,g)|\leq 2^n ||f|| ||g||.$$
	\end{prop}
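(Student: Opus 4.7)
The plan is to reduce the Clifford-valued Cauchy--Schwarz inequality to the classical real Cauchy--Schwarz inequality applied to the scalar-part bilinear form $(f,g)\mapsto[(f,g)]_0$. First I would verify that this form is a genuine real inner product on $H$ viewed as a real vector space: real bilinearity follows from properties 1--3 in the definition of the Clifford inner product (together with $[\bar a]_0=[a]_0$, which comes from $\overline{\bfe_0}=\bfe_0$), symmetry follows from property 3, and positive definiteness is exactly property 4. The classical Cauchy--Schwarz inequality then yields
\[
\bigl|[(f,g)]_0\bigr|\le \|f\|\,\|g\|.
\]

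The next step is to extract each Clifford component of $(f,g)$ by the same scalar trick. Writing $(f,g)=\sum_A (f,g)_A\,\bfe_A$ with $(f,g)_A\in\mathbb R$, property 2 gives $(f,g\overline{\bfe_A})=(f,g)\,\overline{\bfe_A}$. A direct check in the algebra shows $[\bfe_B\overline{\bfe_A}]_0=\delta_{A,B}$, since for $A\neq B$ the product $\bfe_B\overline{\bfe_A}$ is, up to sign, a basis element $\bfe_{A\triangle B}$ of positive grade and therefore has zero scalar part. Consequently
\[
(f,g)_A=\bigl[(f,g\overline{\bfe_A})\bigr]_0.
\]
Applying the scalar Cauchy--Schwarz obtained above, and then property 5 with $a=\overline{\bfe_A}$ (noting $|\overline{\bfe_A}|=1$), gives $|(f,g)_A|\le \|f\|\,\|g\|$ for every index $A$.

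Finally, since $\mathcal{A}_n$ has exactly $2^n$ basis elements indexed by the subsets $A\subset\{1,\dots,n\}$, the triangle inequality in the Euclidean norm on $\mathcal{A}_n$ yields
\[
|(f,g)|=\Bigl|\sum_A (f,g)_A\,\bfe_A\Bigr|\le \sum_A |(f,g)_A|\le 2^n\,\|f\|\,\|g\|,
\]
which is the stated bound. I anticipate the only subtle point to be the scalar Cauchy--Schwarz step: in a Clifford module one generally does not have a true norm-multiplicativity $\|f\lambda\|=|\lambda|\,\|f\|$, only the one-sided semi-bound given by property 5, so one must be careful that this is exactly what is needed to make the classical argument go through. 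Once this is handled, the component-by-component estimate and the $2^n$-term triangle inequality are routine.
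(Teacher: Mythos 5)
Your argument is correct. Note that the paper itself states this proposition without proof (it is quoted from the standard reference of Brackx--Delanghe--Sommen on Hilbert Clifford-modules), so there is no in-paper argument to compare against; your derivation is a legitimate self-contained one. The reduction to the real Cauchy--Schwarz inequality for the scalar-part form $[(\cdot,\cdot)]_0$ is sound: additivity in both slots and real homogeneity follow from properties 1--3, symmetry from $[\bar a]_0=[a]_0$, and positivity from property 4, so the usual discriminant argument applies. The component extraction $(f,g)_A=[(f,g\overline{\bfe_A})]_0$ is justified by $[\bfe_B\overline{\bfe_A}]_0=\delta_{A,B}$, and you correctly identify that property 5 with $a=\overline{\bfe_A}$ is precisely the (one-sided) substitute for norm multiplicativity needed to conclude $\|g\overline{\bfe_A}\|\le\|g\|$. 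Two small remarks: the count of $2^n$ basis elements $\bfe_A$, $A\subseteq\{1,\dots,n\}$, matches the constant in the statement; and since $|\cdot|$ on $\mathcal{A}_n$ is the Euclidean norm of the coefficient vector, your componentwise bound actually gives the sharper estimate $|(f,g)|=\bigl(\sum_A|(f,g)_A|^2\bigr)^{1/2}\le 2^{n/2}\|f\|\,\|g\|$; passing through the $\ell^1$ triangle inequality as you do only costs a factor and still lands within the stated bound $2^n$.
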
 
	Let $H$ be a unitary right $\mathcal{A}_n$-module equipped with an inner product $(\cdot,\cdot).$ It is called a right Hilbert $\mathcal{A}_n$-module if it is complete for the norm topology derived from the equipped inner product.
	
	In this paper we will also use the following Riesz representation theorem.
	\begin{thm}
		Let $H$ be a right Hilbert $\mathcal{A}_n$-module and $T\in H^{*alg}.$ Then $T$ is bounded if and only if there exists a (unique) element $g\in H$ such that for all $f\in H$,
		\begin{align*}
			T(f) := \langle T, f\rangle = (g,f).
		\end{align*}
	\end{thm}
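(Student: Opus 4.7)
The plan is to handle the ``if'' direction and uniqueness first as formalities and then reduce the nontrivial ``only if'' direction to the classical real-Hilbert-space Riesz theorem by extracting the scalar part of $T$. For the ``if'' direction, the Cauchy–Schwarz estimate $|(g,f)|\leq 2^n\|g\|\|f\|$ stated in the proposition just above shows that $T(f):=(g,f)$ is automatically bounded with seminorm $p(f)=\|f\|$ and constant $c=2^n\|g\|$. Uniqueness is equally immediate: if $(g_1,f)=(g_2,f)$ for every $f$, then specializing to $f=g_1-g_2$ and reading off the scalar part gives $\|g_1-g_2\|^2=[(g_1-g_2,g_1-g_2)]_0=0$.

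For the nontrivial direction, the idea is to work component by component starting from the scalar-part functional $T_0(f):=[T(f)]_0$. Because $T$ is additive and satisfies $T(f\lambda)=T(f)\lambda$, and because real scalars commute through the Clifford multiplication, $T_0$ is $\mathbb{R}$-linear; boundedness of $T$ transfers via $|T_0(f)|\leq |T(f)|\leq c\,\|f\|$. View $H$ as a real Hilbert space under the symmetric, positive-definite bilinear form $\langle f,h\rangle_{\mathbb{R}}:=[(f,h)]_0$ (symmetry follows from $(f,h)=\overline{(h,f)}$ having the same scalar part, and the induced norm coincides with $\|\cdot\|$, so completeness transfers automatically). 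The classical real Riesz theorem then produces a unique $g\in H$ such that $T_0(f)=[(g,f)]_0$ for every $f\in H$.

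The remaining step is to promote this identity on scalar parts to the full Clifford-valued identity $T(f)=(g,f)$. The key observation is that for any $x\in\mathcal{A}_n$ the component $[x]_B$ can be read off as a scalar part via $[x]_B=[x\,\overline{\bfe_B}]_0$, since $\{\bfe_A\}$ is orthonormal for $(\cdot,\cdot)_0$ and $\overline{\bfe_B}\bfe_B=1$. Combining this with the right linearity of $T$ and of $(g,\cdot)$ in their respective second slots yields
$$[T(f)]_B=[T(f)\overline{\bfe_B}]_0=[T(f\overline{\bfe_B})]_0=T_0(f\overline{\bfe_B})=[(g,f\overline{\bfe_B})]_0=[(g,f)\overline{\bfe_B}]_0=[(g,f)]_B$$
for every multi-index $B$, so $T(f)=(g,f)$.

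I do not anticipate a serious obstacle: the argument is essentially bookkeeping on top of classical Riesz. The only point that demands attention is to consistently multiply by $\overline{\bfe_B}$ on the \emph{right}, since the non-commutativity of $\mathcal{A}_n$ means only right-multiplication commutes with $T$ and with the second argument of the inner product; the analogous left-sided manipulation would fail, and this is exactly why we must set up $H$ as a right module in the first place.
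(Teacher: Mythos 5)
Your argument is correct. Note that the paper itself states this theorem without proof, quoting it from Brackx--Delanghe--Sommen, so there is no in-paper argument to compare against; your reduction --- apply the classical real Riesz theorem to the scalar-part functional $T_0(f)=[T(f)]_0$ on $H$ viewed as a real Hilbert space with inner product $[(\cdot,\cdot)]_0$, then recover all components via $[T(f)]_B=[T(f\overline{\bfe_B})]_0$ using right $\mathcal{A}_n$-linearity --- is the standard and correct route, and your emphasis on multiplying by $\overline{\bfe_B}$ on the right is exactly the point where the right-module structure is used. The only detail worth flagging is that the paper's notion of boundedness is phrased in terms of an arbitrary seminorm $p$; for the Riesz theorem you must (as you implicitly do) take $p$ to be the Hilbert-module norm $\|\cdot\|$ so that $T_0$ is norm-bounded and the real Riesz theorem applies.
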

	
	Denote by $L^2_\varphi(\mathbb R^{n+1},\mathcal{A}_n)$ the weighted $L^2$ space defined as
	\begin{align*}
		L^2_\varphi(\mathbb R^{n+1},\mathcal{A}_n) = \{u=\sum_{A}u_A\bfe_A:||u||_{\varphi}^2= \int_{\mathbb R^{n+1}} |u(x)|^2e^{-2\varphi(x)}dx<\infty\}
	\end{align*}
	equipped with the inner product
	\begin{align*}
		(f,g) = (g,f)_\varphi = \int_{\mathbb R^{n+1}} \overline {f(x)}g(x)e^{-2\varphi(x)}dx.
	\end{align*}
	Note that one can easily see that 
	\begin{align*}
	    ||f||_{\varphi}^2= [(f,f)]_0.
	\end{align*}

Consider now $u=u_0+\sum_{j=1}^nu_je_j$, where $u_j$ are real functions (such function $u$ is called a vector-valued function in the paper). It is easy to see that $u\overline u=[u\overline u]_0=|u|^2$. When $f=\lambda u$, where a constant $\lambda=\sum_A \lambda_Ae_A$, we have 
\begin{equation}\label{e-121}
\begin{split}
\|\lambda u\|_\varphi^2&=\int_{\mathbb R^{n+1}}|\lambda u(x)|^2e^{-2\varphi(x)}dx\\
&=\int_{\mathbb R^{n+1}}[\lambda u\overline{\lambda u}]_0 e^{-2\varphi(x)}dx=\int_{\mathbb R^{n+1}}[\lambda u\overline{u}\overline{\lambda}]_0 e^{-2\varphi(x)}dx\\
&=\int_{\mathbb R^{n+1}}|\lambda|^2|u|^2 e^{-2\varphi(x)}dx=|\lambda|^2\|u\|_\varphi^2.
\end{split}
\end{equation}

	The generalized Bargmann-Fock space consisting of left monogenic functions is defined as
	\begin{align*}
		F^2_\varphi(\mathbb R^{n+1}, \mathcal{A}_n) = \{u\text{ is left monogenic on } \mathbb R^{n+1}: ||u||_{\varphi}^2= \int_{\mathbb R^{n+1}} |u(x)|^2e^{-2\varphi(x)}dx<\infty\}.
	\end{align*}
	We would like to study the corresponding Bergman kernel, denoted by $B_\varphi(w,x)$, of $F_\varphi(\mathbb R^{n+1},\mathcal{A}_n)$. First of all, we need to show the existence of $B_\varphi(w,x)$. By the Riesz representation theorem, it suffices to show that the point-wise evaluation functional $T$ is bounded, i.e.,
	\begin{align*}
		|T(u)(x)|\leq C_x ||u||_\varphi,
	\end{align*}
	which will be proved in the sequel section.
	
	\section{Two technical results}
	To prove our main result, we need the technical results Theorem \ref{thm1}, Theorem \ref{thm2} and Theorem \ref{thm3}. The first two concern the $L^2$-estimate for the Dirac operator in the Clifford algebra setting, and the third is the mean value inequality for $F_\varphi^2(\mathbb R^{n+1},\mathcal{A}_n).$ 
	\subsection{$L^2$-estimate for the Dirac operator}
	A variant of H\"ormander's $L^2$-estimate for the Dirac operator would be the key to give the lower bound of the weighted Bergman kernel on the diagonal.

    Recall that the operator $D^*_\varphi$ acts on components of $\alpha = \sum_A \alpha_A e_A,$ where $\alpha_A\in C_0^\infty(\Omega,\mathbb R).$ That is, $D^*_\varphi \alpha_A = -e^{2\varphi} \overline D(\alpha_A e^{-2\varphi})= 2\alpha_A \overline D\varphi-\overline D\alpha_A.$ Instead of acting on $\alpha=\sum_A \alpha_A e_A$, $D^*_\varphi$ acting on components is a key ingredient in our proofs, which allows us to use some properties of vector-valued functions (cf. \eqref{e-121}). This is crucial in our proofs with which we can remove the conditions given in \cite{Liu-Chen-Pan}. 
	To prove Theorem \ref{thm1} and \ref{thm2}, we consider 
	\begin{align}\label{def1}
		\begin{split}
			(f,\alpha)_\varphi & = \int_\Omega \overline {\alpha(x)} f(x) e^{-2\varphi(x)} dx\\
			& = \int_\Omega (\sum_A \alpha_A(x) \overline {e_A}) Du(x)e^{-2\varphi(x)} dx\\
			& =\sum_A \overline{e_A}\int_\Omega \alpha_A(x) Du(x)e^{-2\varphi(x)} dx\\\\
			& = -\sum_A \overline {e_A}\int_\Omega [(\alpha_A(x)e^{-2\varphi(x)}) D]u(x) dx\\
			& = \sum_A \overline{e_A}\int_\Omega -\overline {e^{2\varphi(x)} \overline D(\alpha_A(x)e^{-2\varphi(x)})} u(x) e^{-2\varphi(x)} dx\\
			& = \sum_A\overline{e_A} (u,-e^{2\varphi} \overline D(\alpha_Ae^{-2\varphi}))_\varphi \\
			& = \sum_A \overline{e_A}(u,D_\varphi^* \alpha_A)_\varphi,
		\end{split}
	\end{align} 
where the forth equality just follows from integration by parts for each component ( one could also obtain this equality by using Stokes-Green theorem in the Clifford analysis setting \cite{Brackx-Delanghe-Sommen} ).

	\begin{proof}[{\rm \textbf{Proof of Theorem \ref{thm1}:}}]
		(Necessity) If there holds (\ref{dbar}) and (\ref{cond1}), then 
		\begin{align*}
			|(f,\alpha)_\varphi|^2 &= |(Du,\alpha)_\varphi|^2\\
			& = \left|\int_\Omega \overline {\alpha(x)} Du(x)e^{-2\varphi(x)} dx\right|^2\\
			& = \left| \sum_A \overline{e_A}\int_\Omega \alpha_A(x) Du(x) e^{-2\varphi(x)} dx \right|^2 \\
			& = \left| \sum_A \overline {e_A} (u,D^*_\varphi \alpha_A)_\varphi \right|^2 \\
			& \leq 2^{2n} ||u||_\varphi^2 \sum_A ||D^*_\varphi \alpha_A||_\varphi^2\\
			& \leq 2^{2n}C \sum_A ||D^*_\varphi\alpha_A||_\varphi^2.
		\end{align*}
		
		(Sufficiency) Suppose that the inequality (\ref{cond2}) is true. Let 
		\begin{align*}
			E = \{\sum_{\text{finite \ sum}}\lambda_jD^*_\varphi \beta_j; \beta_j\in C_0^\infty(\Omega,\mathbb R),\lambda_j\in \mathcal A_n\}\subset L^2_\varphi(\Omega, \mathcal A_n).
		\end{align*}
		It is easy to see that $E$ is an abelian group and hence a submodule over $\mathcal A_n$ by Definition \ref{defmodule}
		Recall that we aim to find solutions of $Du=f$. Define an $\mathcal A_n$-linear functional $T_f$ on $E$ as follows,
		\begin{align*}
			\langle T_f,\sum\lambda_j D^*_\varphi \beta_j\rangle  = \sum(\lambda_j f,\beta_j)_\varphi =\sum \lambda_j\int_\Omega \beta_j(x) f(x)e^{-2\varphi(x)} dx,
		\end{align*}
where $T_f$ is well-defined by \eqref{cond2}.
Note that from (\ref{cond2}) we have
		\begin{align*}
\begin{split}
|\langle T_f,\lambda D^*_\varphi \beta \rangle| &= |(\lambda f,\beta)_\varphi|=|(f,\overline\lambda \beta)_\varphi|\\
& \leq 2^n\sqrt C \sqrt{\sum_A \|D^*_\varphi(\overline\lambda \beta)_A\|^2_\varphi }=2^n\sqrt C \sqrt{\sum_A \|D^*_\varphi(\lambda_A \beta)\|^2_\varphi }\\
&=2^n\sqrt C\sqrt{\sum_A \|2\lambda_A\beta\overline D\varphi-\lambda_A\overline D\beta\|^2_\varphi   }=2^n\sqrt C\sqrt{\sum_A|\lambda_A|^2 \|D^*_\varphi\beta\|^2_\varphi  }\\
&=2^n\sqrt C|\lambda|||D^*_\varphi \beta||_\varphi=2^n\sqrt C ||\lambda D^*_\varphi\beta||_\varphi,
\end{split}
		\end{align*}
where $\beta\in C_0^\infty(\Omega,\mathbb R)$. The last equality follows from \eqref{e-121} and the fact that $D^*_\varphi\beta$ is a vector-valued function.
		By \cite[Proposition 2.13]{Brackx-Delanghe-Sommen}, the above inequality holds true on the whole set $E$, which shows that $T_f$ is bounded on $E$. Then, by the Hahn-Banach theorem in the Clifford algebra setting (cf. Theorem \ref{HB}), $T_f$ can be extended to a linear functional $\widetilde T_f$ on $L^2_\varphi(\Omega,\mathcal A_n)$ with
		\begin{align*}
			|\langle \widetilde T_f, g\rangle| \leq 2^n\sqrt{C} ||g||_\varphi, \quad \forall g\in L^2_\varphi(\Omega, \mathcal A_n).
		\end{align*} 
		Using the Riesz representation theorem, we deduce that there exists a $u\in L^2_\varphi(\Omega,\mathcal A_n),$ such that 
		\begin{align*}
			\langle \widetilde T_f,g\rangle = (u,g)_\varphi, \quad \forall g\in L^2_\varphi(\Omega,\mathcal A_n).
		\end{align*}
		For all $\beta\in C_0^\infty(\Omega,\mathbb R),$ let $g=D^*_\varphi \beta$. Then
		\begin{align*}
			(f,\beta)_\varphi = \langle \widetilde T_f, D^*_\varphi \beta \rangle = (u,D^*_\varphi\beta)_\varphi = (Du,\beta)_\varphi,
		\end{align*}
		which means that
		\begin{align*}
			\int_\Omega \beta(x) f(x)e^{-2\varphi(x)} dx = \int_\Omega \beta(x) Du(x)e^{-2\varphi(x)}dx.
		\end{align*}
		In particular we set $\beta(x)=\alpha(x)e^{2\varphi(x)},$ and then
		\begin{align*}
			\int_\Omega \alpha(x)f(x)dx = \int_\Omega \alpha(x) Du(x)dx, \quad \forall \alpha\in C_0^\infty(\Omega,\mathbb R).
		\end{align*}
		Therefore 
		\begin{align*}
			Du = f
		\end{align*}
		with $||u||_\varphi^2\leq 2^{2n}C$.
	\end{proof}
	
	\begin{proof}[{\rm \textbf{Proof of Theorem \ref{thm2}:}}]
		First we have
		\begin{align*}
			||D^*_\varphi \alpha_A||_\varphi^2 & = \int_\Omega |D^*_\varphi \alpha_A(x)|^2 e^{-2\varphi(x)} dx\\
			& = (D^*_\varphi\alpha_A,D^*_\varphi\alpha_A)_\varphi\\
			& = (\alpha_A, D(\alpha_A \overline D(2\varphi)-\overline D\alpha_A))_\varphi\\
			& = (\alpha_A,D\alpha_A \overline D(2\varphi))_\varphi + (\alpha_A, \alpha_A\laplace {(2\varphi)})_\varphi - (\alpha_A,\laplace \alpha_A)_\varphi\\
			& = (\alpha_A, D\alpha_A \overline D(2\varphi) - D\overline D\alpha_A)_\varphi + (\alpha_A,\alpha_A\laplace (2\varphi))_\varphi.
		\end{align*}
The second equality holds true since $D^*_\varphi\alpha_A$ is a vector-valued function. Note that
		\begin{align*}
			(\alpha_A,D\alpha_A\overline D(2\varphi))_\varphi &= \int_\Omega \alpha_A(x) D\alpha_A(x) \overline D(2\varphi(x))e^{-2\varphi(x)}dx\\
			& = -\int_\Omega (D\alpha_A(x)) \alpha_A(x) (\overline D e^{-2\varphi(x)})dx\\
			& = \int_\Omega(D\alpha_A(x)) (\overline D\alpha_A(x)) e^{-2\varphi(x)}dx + \int_\Omega \alpha_A(x)\laplace \alpha_A(x) e^{-2\varphi(x)} dx.
		\end{align*}
		Then we have
		\begin{align*}
			||D_\varphi^* \alpha_A||_\varphi^2 = ||\overline D\alpha_A||_\varphi^2 + \int_\Omega|\alpha_A(x)|^2 \laplace (2\varphi (x))e^{-2\varphi(x)}dx.
		\end{align*}
		
		Consequently we have
		\begin{align*}
			|(f,\alpha)_\varphi|^2 &\leq \left(\sum_A e_A|(f,\alpha_A)_\varphi|\right)^2\\
			&\leq 2^{2n} \sum_A |(f,\alpha_A)_\varphi|^2\\
			&= 2^{2n} \sum_A \left(\int_\Omega \frac{f(x)}{\sqrt{\laplace (2\varphi(x))}}\alpha_A(x)\sqrt{\laplace (2\varphi(x))} e^{-2\varphi(x)}dx\right)^2\\
			&\leq 2^{2n} \sum_A \left(\int_\Omega \frac{|f(x)|^2}{\laplace (2\varphi(x))}e^{-2\varphi(x)} dx \right) \left(\int_\Omega |\alpha_A(x)|^2\laplace (2\varphi(x)) e^{-2\varphi(x)}dx\right)\\
			& = 2^{2n} \sum_A||\frac{f}{\sqrt{\laplace (2\varphi)}}||_{\varphi}^2 ||\alpha_A \sqrt{\laplace (2\varphi)}||_\varphi^2\\
			&\leq 2^{2n}||\frac{f}{\sqrt{\laplace (2\varphi)}}||_{\varphi}^2 \sum_A||D^*_\varphi\alpha_A||_\varphi^2
		\end{align*}
		This completes the proof of Theorem \ref{thm2}.
	\end{proof}
	
	\subsection{Moser's iteration}
	In this section we prove Theorem \ref{thm3}, the weighted mean value inequality for $u\in F_\varphi^2(\mathbb R^{n+1},\mathcal{A}_n),$ which plays a crucial role in the estimate of the Bergman kernel.
	
	To prove this theorem, we use a modified argument of the Moser iteration method which is an important technique in the theory of partial differential equations (see e.g. \cite{Han-Lin}).
	
	\begin{proof}[{\rm \textbf{Proof of Theorem \ref{thm3}:}}]
		Since $u$ is left monogenic, we have
		\begin{align*}
			Du=0,\quad \text{ and } \laplace u_A= 0 \quad \text{for all possible index $A$}.
		\end{align*}
		In the following, for simplicity, we set $v=u_A,$ and write $$\int_{\mathbb R^{n+1}} v(x) dx = \int v $$ if there is no confusion.
		As given in \cite{Han-Lin}, we consider Moser's iteration for such $v.$
		Set $\tilde v = v^++k,$ where $k>0$ is a constant, $v^+= \max\{0,v\}$ and 
		\begin{align*}
			\tilde v_m=
			\begin{cases}
				\tilde v, & \mbox{if } v<m,\\
				k+m, & \mbox{if } v\geq m.
			\end{cases}
		\end{align*}
		Then we have $\del \tilde v_m=0$ in $\{v<0\}$ and $\{v>m\}$ and $\tilde v_m\leq \tilde v.$ Let
		\begin{align*}
			\phi=\eta^2(\tilde v_m^\beta\tilde v-k^{\beta+1}) 
		\end{align*}
		for some $\beta\geq 0$ and some nonnegative function $\eta\in C^1_0(B_0(1)).$
		Then
		\begin{align*}
			\del \phi = \eta^2\tilde v^\beta_m(\beta \del \tilde v_m +\del\tilde v) + 2\eta\del \eta(\tilde v^\beta_m\tilde v-k^{\beta+1}).
		\end{align*}
		Note that in $\{v\leq 0\}$ we have $\phi=0$ and $\del \phi=0$. It follows easily from the definitions of $\tilde v$ and $\tilde v_m$ that $v^+\leq \tilde v$ and $0\leq \tilde v^\beta_m\tilde v-k^{\beta+1}\leq \tilde v^\beta_m\tilde v$. Then
		\begin{align*}
			\int \langle \del v,\del \phi\rangle e^{-(\beta+2)\varphi}&= \int \beta\eta^2\tilde v^\beta_m \langle \del v,\del \tilde v_m \rangle e^{-(\beta+2)\varphi} +\int \eta^2\tilde v_m^\beta \langle \del v,\del\tilde v \rangle e^{-(\beta+2)\varphi}\\
			&\ \ \  + 2\int \langle \del v, \del \eta\rangle \eta (\tilde v_m^\beta\tilde v-k^{\beta+1})e^{-(\beta+2)\varphi}\\
			&=\int \beta \eta^2\tilde v_m^\beta |\del \tilde v_m|^2 e^{-(\beta+2)\varphi}+ \int \eta^2\tilde v_m^\beta |\del \tilde v |^2 e^{-(\beta+2)\varphi}\\
			&\ \ \ +2\int \langle \del v,\del \eta \rangle \eta(\tilde v_m^\beta\tilde v-k^{\beta+1})e^{-(\beta+2)\varphi}\\
			&\geq \beta \int \eta^2\tilde v_m^\beta |\del \tilde v_m|^2 e^{-(\beta+2)\varphi}+ \int \eta^2\tilde v_m^\beta |\del \tilde v |^2 e^{-(\beta+2)\varphi}\\
			&\ \ \  -\alpha_1^2 \int |\del \tilde v|^2 \tilde v_m^\beta \eta^2e^{-(\beta+2)\varphi}- \frac{1}{\alpha_1^2}\int |\del \eta|^2 \tilde v_m^\beta \tilde v^2e^{-(\beta+2)\varphi},
		\end{align*}
		where in the last inequality we have used $2\langle \del v,\del \eta\rangle \geq -(\alpha_1^2|\del v|^2+\frac{1}{\alpha^2_1}|\del \eta|^2).$
		This implies that
		\begin{equation}\label{tem_eq1}
		\begin{aligned}
				& \beta\int  \eta^2\tilde v_m^\beta |\del \tilde v_m|^2e^{-(\beta+2)\varphi} + (1-\alpha^2_1)\int \eta^2\tilde v_m^\beta |\del \tilde v |^2 e^{-(\beta+2)\varphi}\\
				&\leq  \int \langle \del v,\del \phi\rangle e^{-(\beta+2)\varphi} + \frac{1}{\alpha_1^2}\int |\del \eta|^2 \tilde v_m^\beta \tilde v^2 e^{-(\beta+2)\varphi}.
				\end{aligned}
				\end{equation}
				Then by integration by parts, 
				\begin{equation*}
				\begin{aligned}
			(\ref{tem_eq1})	& = -\int \phi\laplace v e^{-(\beta+2)\varphi} +(\beta+2) \int \phi\langle \del v,\del \varphi\rangle e^{-(\beta+2)\varphi} + \frac{1}{\alpha_1^2}\int |\del \eta|^2 \tilde v_m^\beta \tilde v^2 e^{-(\beta+2)\varphi}\\
				& = (\beta+2)\int \phi \langle \del v,\del \varphi\rangle e^{-(\beta+2)\varphi} + \frac{1}{\alpha_1^2}\int |\del \eta|^2 \tilde v_m^\beta \tilde v^2e^{-(\beta+2)\varphi}\\
				& = (\beta+2)\int \eta^2(\tilde v_m^\beta\tilde v-k^{\beta+1})\langle \del v,\del \varphi\rangle e^{-(\beta+2)\varphi} +\frac{1}{\alpha_1^2}\int |\del \eta|^2 \tilde v_m^\beta \tilde v^2 e^{-(\beta+2)\varphi}\\
				& \leq \frac{(\beta+2)}{2}\alpha_2^2 \int |\del \tilde v|^2\eta^2 \tilde v_m^\beta e^{-(\beta+2)\varphi}+ \frac{(\beta+2)}{2\alpha_2^2}\int |\del \varphi|^2\tilde v^2\tilde v_m^\beta\eta^2e^{-(\beta+2)\varphi}+\frac{1}{\alpha_1^2}\int |\del \eta|^2 \tilde v_m^\beta\tilde v^2e^{-(\beta+2)\varphi},
			\end{aligned}
		\end{equation*}
	where in the last inequality we have used $2\langle \del v,\del \varphi\rangle \leq (\alpha_2^2|\del v|^2+\frac{1}{\alpha^2_2}|\del \varphi|^2).$
		Consequently,
		\begin{align*}
			&\beta\int  \eta^2\tilde v_m^\beta |\del \tilde v_m|^2e^{-(\beta+2)\varphi} + (1-\alpha^2_1-\frac{(\beta+2)}{2}\alpha^2_2)\int \eta^2\tilde v_m^\beta |\del \tilde v |^2 e^{-(\beta+2)\varphi}\\
			&\leq  \frac{(\beta+2)}{2\alpha_2^2}\int |\del \varphi|^2\tilde v^2\tilde v_m^\beta\eta^2e^{-(\beta+2)\varphi} +\frac{1}{\alpha_1^2}\int |\del \eta|^2 \tilde v_m^\beta\tilde v^2e^{-(\beta+2)\varphi}.
		\end{align*}
		In particular we set $\alpha_1^2=\frac{1}{4}$ and $\alpha^2_2=\frac{1}{2(\beta+2)},$ and then we have
		\begin{align*}
			\beta\int  \eta^2\tilde v_m^\beta |\del \tilde v_m|^2 e^{-2\varphi} + \frac{1}{2}\int \eta^2\tilde v_m^\beta |\del \tilde v |^2e^{-2\varphi} \leq  (\beta+2)^2\int |\del \varphi|^2\tilde v^2\tilde v_m^\beta\eta^2 e^{-2\varphi} +4\int |\del \eta|^2 \tilde v_m^\beta\tilde v^2 e^{-2\varphi}.
		\end{align*}
		Set $w=\tilde v^\frac{\beta}{2}_m\tilde v.$ A direct computation gives
		\begin{align*}
			\del w &= \tilde v\del \tilde v^\frac{\beta}{2}_m + \tilde v_m^\frac{\beta}{2} \del \tilde v\\
			& = \frac{\beta}{2}\tilde v \tilde v^\frac{\beta-2}{2}_m \del \tilde v_m + \tilde v^\frac{\beta}{2}_m \del \tilde v,
		\end{align*}
		and then,
		\begin{align*}
			|\del w|^2 & = \langle \frac{\beta}{2}\tilde v \tilde v^\frac{\beta-2}{2}_m \del \tilde v_m + \tilde v^\frac{\beta}{2}_m \del \tilde v, \frac{\beta}{2}\tilde v \tilde v^\frac{\beta-2}{2}_m \del \tilde v_m + \tilde v^\frac{\beta}{2}_m \del \tilde v \rangle \\
			& = \frac{\beta^2}{4} \tilde v^2 \tilde v^{\beta-2}_m |\del \tilde v_m|^2 + \tilde v^\beta_m|\del \tilde v|^2 + \beta \tilde v \tilde v^{\beta-1}_m \langle \del\tilde v_m,\del \tilde v \rangle\\
			&=\frac{\beta^2}{4} \tilde v^{\beta}_m|\del \tilde v_m|^2+ \tilde v^\beta_m|\del \tilde v|^2 +\beta \tilde v^{\beta}_m\langle \del\tilde v_m,\del \tilde v \rangle\\
			&\leq \frac{\beta^2}{4} \tilde v_m^\beta |\del \tilde v_m|^2 +\tilde v^\beta_m|\del\tilde v|^2 + \beta \tilde v_m^\beta |\del \tilde v|^2 + \frac{\beta}{4}\tilde v_m^\beta |\del \tilde v_m|\\
			&= \frac{(\beta+1)\beta}{4}\tilde v_m^\beta|\del\tilde v_m|^2 + (\beta+1)\tilde v_m^\beta |\del \tilde v|^2\\
			&\leq 2(\beta+1)\left(\beta \tilde v_m^\beta |\del \tilde v_m|^2 +\frac{1}{2}\tilde v_m^\beta |\del\tilde v_m|^2\right),
		\end{align*}
		where the third equality follows from $\del \tilde v_m=0$ in $\{v< 0\}$ and $\{v> m\}.$
		
		Therefore we have
		\begin{align*}
			\int |\del w|^2 \eta^2 e^{-(\beta+2)\varphi}&\leq 2(1+\beta)\left(\beta\int \tilde v_m^\beta |\del \tilde v_m|^2\eta^2 e^{-(\beta+2)\varphi}+ \frac{1}{2}\int \tilde v_m^\beta |\del \tilde v|^2\eta^2e^{-(\beta+2)\varphi}\right)\\
			&\leq 2{(1+\beta)} \left((\beta+2)^2\int |\del \varphi|^2\tilde v^2\tilde v_m^\beta\eta^2 e^{-(\beta+2)\varphi} +4\int |\del \eta|^2 \tilde v_m^\beta\tilde v^2 e^{-(\beta+2)\varphi}\right)\\
			&=2{(1+\beta)} \left((\beta+2)^2\int |\del\varphi|^2 w^2\eta^2 e^{-(\beta+2)\varphi}+4\int |\del \eta|^2 w^2e^{-(\beta+2)\varphi}\right).
		\end{align*}
		
		Note that 
		\begin{align*}
			\int |\del (w\eta e^{-\frac{\beta+2}{2}\varphi})|^2 &\leq 3 \left(\int |\del w|^2\eta^2e^{-(\beta+2)\varphi}+\int |\del \eta|^2w^2e^{-(\beta+2)\varphi} + \frac{(\beta+2)^2}{4} \int |\del \varphi|^2 w^2\eta^2 e^{-(\beta+2)\varphi}\right)\\
			&\leq 3\left(2(1+\beta)(\beta+2)^2+\frac{(\beta+2)^2}{4}\right)\int  |\del \varphi|^2w^2\eta^2e^{-(\beta+2)\varphi}\\
			& \ \ \  +24(1+\beta) \int |\del \eta|^2w^2e^{-(\beta+2)\varphi}\\
			&\leq 24 (1+\beta)(\beta+2)^2\left(\int |\del \eta|^2w^2e^{-(\beta+2)\varphi}+ \int |\del \varphi|^2 w^2\eta^2 e^{-(\beta+2)\varphi}\right)
		\end{align*}
		
		By Sobolev's inequality we have
		\begin{align*}
			\left(\int |w\eta e^{-\frac{\beta+2}{2}\varphi}|^{2\chi}\right)^\frac{1}{\chi}\leq \widetilde C(\beta+2)^3 \left(\int (|\del \varphi|^2\eta^2 + |\del \eta|^2) w^2e^{-(\beta+2)\varphi}\right),
		\end{align*}
		where $\chi=\frac{n+1}{n-1}$ for $n>1$ and $\chi>2$ for $n=1.$ The cut-off function is chosen as follows. For any $0<r<R\leq 1$, set $\eta\in C_0^1(B_0(R))$ with the property 
		\begin{align*}
			\eta \equiv 1 \text{ in }B_0(r) \text{ and } |\del \eta|\leq \frac{2}{R-r}.
		\end{align*}
		Note that by assumption we have
		\begin{align*}
			|\del \varphi(x)|\leq L{ |x|},
		\end{align*}
		where $L$ is a positive constant. 
		Then 
		\begin{align*}
			\left(\int_{B_0(r)} w^{2\chi} e^{-(\beta+2)\chi\varphi}\right)^\frac{1}{\chi} \leq C\frac{(\beta+2)^3}{(R-r)^2}\int_{B_0(R)} w^2 e^{-(\beta+2)\varphi},
		\end{align*}
		which is equivalent to 
		\begin{align*}
			\left(\int_{B_0(r)} \tilde v^{\beta\chi}_m \tilde v^{2\chi}e^{-(\beta+2)\chi\varphi}\right)^\frac{1}{\chi}\leq C\frac{(\beta+2)^3}{(R-r)^2}\int_{B_0(R)}\tilde v_m^\beta\tilde v^2e^{-(\beta+2)\varphi}.
		\end{align*}
		Consequently,
		\begin{align*}
			\left(\int_{B_0(r)}\tilde v_m^{\gamma\chi}e^{-\gamma\chi\varphi}\right)^\frac{1}{\gamma\chi}\leq \left(C\frac{(\beta+2)^3}{(R-r)^2}\right)^\frac{1}{\gamma}\left(\int_{B_0(R)} \tilde v^{\gamma}e^{-\gamma\varphi}\right)^\frac{1}{\gamma},
		\end{align*}
		where $\gamma=\beta+2.$ Letting $m\to \infty,$ we have
		\begin{align*}
			\left(\int_{B_0(r)}\tilde v^{\gamma\chi}e^{-\gamma \chi\varphi}\right)^\frac{1}{\gamma\chi}\leq \left(C\frac{\gamma^3}{(R-r)^2}\right)^\frac{1}{\gamma}\left(\int_{B_0(R)} \tilde v^{\gamma}e^{-\gamma\varphi}\right)^\frac{1}{\gamma}.
		\end{align*}
		As in \cite{Han-Lin}, we iterate, beginning with $\gamma =2,$ as $2,2\chi,2\chi^2,...$. Set 
		\begin{align*}
			\gamma_i=2\chi^i, \quad \text{ and } r_i=\frac{1}{2}+\frac{1}{2^{i+1}}
		\end{align*}
		for $i=0,1,2,...$.
		Then by $\gamma_i=\gamma_{i-1} \chi,$ and $r_{i-1}-r_i=\frac{1}{2^{i+1}}$, we have
		\begin{align*}
			||\tilde v e^{-\varphi}||_{L^{\gamma_1}(B_{0}(r_1))}&\leq \left(2^{2(1+1)}C{\gamma_0^3}\right)^\frac{1}{\gamma_0}||\tilde v e^{-\varphi}||_{L^{\gamma_0}(B_0({r_0}))},\\
			||\tilde v e^{-\varphi}||_{L^{\gamma_2}(B_0({r_2}))}&\leq \left(2^{2(2+1)}C{\gamma_1^3}\right)^\frac{1}{\gamma_1}||\tilde v e^{-\varphi}||_{L^{\gamma_1}(B_0({r_1}))},\\
			&\ .\\
			&\ .\\
			&\ .\\
			||\tilde v e^{-\varphi}||_{L^{\gamma_i}(B_0({r_i}))}&\leq \left(2^{2(i+1)}C{\gamma_i^3}\right)^\frac{1}{\gamma_{i-1}}||\tilde v e^{-\varphi}||_{L^{\gamma_{i-1}}(B_0({r_{i-1}}))}.
		\end{align*}
		Using the argument in \cite{Han-Lin}, we can find a certain constant $M,$ which is independent of $\gamma_i,$ such that
		\begin{align*}
			||\tilde v e^{-\varphi}||_{L^{\gamma_i}(B_0({r_i}))}\leq M^{\sum_{j=0}^{i-1}\frac{j}{\gamma_{j}}} ||\tilde v e^{-\varphi}||_{L^{\gamma_{0}}(B_0({r_{0}}))},
		\end{align*}
		and consequently, 
		\begin{align*}
			\left(\int_{B_0({\frac{1}{2}})} \tilde v^{2\chi^i}e^{-2\chi^i\varphi}\right)^\frac{1}{2\chi^i}\leq \widetilde{ M} \left(\int_{B_0(1)}\tilde v^2 e^{-2\varphi}\right)^\frac{1}{2}.
		\end{align*}
		Let $i\to\infty,$ we have
		\begin{align*}
			\sup_{B_0(\frac{1}{2})} \tilde v e^{-\varphi}\leq \widetilde{ M} ||\tilde v e^{-\varphi}||_{L^2(B_0(1))}.
		\end{align*}
		Then
		\begin{align*}
			\sup_{B_0(\frac{1}{2})} v^+ e^{-\varphi}\leq \widetilde{ M} ||v^+ e^{-\varphi}||_{L^2(B_0(1))}
		\end{align*}
		as $k\to 0.$ 
		Similarly, we apply the same argument to $-v^-$
		and deduce that
		\begin{align*}
			\sup_{B_0(\frac{1}{2})} (-v^- e^{-\varphi})\leq \widetilde M ||v^- e^{-\varphi}||_{L^2(B_0(1))}.
		\end{align*}
		Finally,
		\begin{align*}
			\sup_{B_0(\frac{1}{2})} |v|e^{-\varphi} = \sup_{B_0(\frac{1}{2})} (v^+-v^-)e^{-\varphi}\leq \sup_{B_0(\frac{1}{2})} v^+e^{-\varphi} +\sup_{B_0(\frac{1}{2})} (-v^-e^{-\varphi}) \leq 2\widetilde{ M} ||ve^{-\varphi} ||_{L^2(B_0(1))},
		\end{align*}
		which gives 
		\begin{align*}
		    |u_A(0)e^{-2\varphi(0)}|\leq 2\widetilde M \left(\int_{B_0(1)} |u_A(y)|^2 e^{-2\varphi(y)} dy\right)^\frac{1}{2}.
		\end{align*}
		By a rescaling argument, we have
		$$
			||ve^{-\varphi} ||_{L^\infty(B_0(\frac{R}{2}))} \leq {2\widetilde M R^{-\frac{n}{2}}} ||ve^{-\varphi} ||_{L^2(B_0(R))},
		$$
		where $0<R<1.$
		Consequently, by translation, we have
		\begin{align*}
			|u_A(x)e^{-\varphi(x)}| \leq C_R \left(\int_{B_x(R)} |u_A(y)|^2 e^{-2\varphi(y)} dy\right)^\frac{1}{2}.
		\end{align*}
		The proof is completed.
	\end{proof}

	\section{Estimate of the Bergman kernel}
	In this section, we are devoted to proving the main theorem.
	As shown before, we prove that the point-wise evaluation functional $T$ is bounded. Then by the Riesz representation theorem, there exists $B_\varphi(\cdot,x)\in F^2_\varphi(\mathbb R^{n+1},\mathcal{A}_n)$ such that
	\begin{align*}
		u(x) &= ( u,B_\varphi(\cdot,x))_\varphi\\
		& = \int_{\mathbb R^{n+1}} \overline {B_\varphi(y,x)} u(y) e^{-2\varphi(y)}dy
	\end{align*}
	for all $u\in F^2_\varphi(\mathbb R^{n+1},\mathcal{A}_n).$ Moreover, we have
	\begin{align*}
	    |B_\varphi(x,x)|=\sup_{||u||_\varphi=1} |u(x)|^2,
	\end{align*}
	which plays a crucial role in the study of Bergman kernel. In fact, 
	\begin{align*}
	    |B_\varphi(x,x)| & = \left[\int_{\mathbb R^{n+1}}\overline{B_\varphi(y,x)}B_\varphi(y,x)e^{-2\varphi(y)}dy\right]_0 \\
	    & = \int_{\mathbb R^{n+1}}|B_\varphi(y,x)|^2e^{-2\varphi(y)}dy\\
	    & = \sup_{||u||_\varphi=1}\left[\int_{\mathbb R^{n+1}}\overline{B_\varphi(y,x)}u(y)e^{-2\varphi(y)}dy\right]_0^2\\
	    & = \sup_{||u||_\varphi=1}|u(x)|^2,
	\end{align*}
	where the third equality follows from duality of $L^2$ space for general vector-valued functions.
	The kernel $B_\varphi(x,y)$ is left-monogenic in $x$ and anti-right-monogenic in $y$, so it is smooth.

	\subsection{Estimate of the weighted Bergman kernel on the diagonal}
	In this subsection we will give estimates of $B_\varphi(w,x)$ on the diagonal.
	\begin{prop}
		There exist positive constants $C_1,C_2$ such that 
		\begin{align*}
			|B_\varphi(x,x)|\leq  C_1e^{2\varphi(x)}
		\end{align*}
		and if additionally, $\varphi$ is $2$-homongenous,
		\begin{align*}
			|B_\varphi (x, x)|\geq C_2 e^{2\varphi(x)}.
		\end{align*}
	\end{prop}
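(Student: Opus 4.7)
The upper bound follows immediately from the extremal characterization
\begin{equation*}
|B_\varphi(x, x)| = \sup_{\|u\|_\varphi = 1} |u(x)|^2
\end{equation*}
established at the start of Section~4, combined with Theorem~\ref{thm3}. For any $u \in F^2_\varphi(\mathbb R^{n+1}, \mathcal{A}_n)$ with $\|u\|_\varphi = 1$, applying Theorem~\ref{thm3} at a fixed radius (say $R = 1/2$) gives $|u(x)|e^{-\varphi(x)} \leq C_R \|u\|_\varphi = C_R$, and taking the supremum over all such $u$ yields $|B_\varphi(x, x)| \leq C_R^2 e^{2\varphi(x)}$, which is the desired upper bound.

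For the lower bound, the plan is to mimic the H\"ormander--Delin--Lindholm construction from the complex setting: for each $x$, produce a monogenic $F \in F^2_\varphi$ with $|F(x)|^2/\|F\|_\varphi^2 \gtrsim e^{2\varphi(x)}$. I would fix a cutoff $\chi \in C^\infty_0(\mathbb R^{n+1})$ with $0 \le \chi \le 1$, $\chi \equiv 1$ on $B(0, r)$, $\operatorname{supp}\chi \subset B(0, 2r)$, and $|D\chi| \le C/r$, then set $\chi_x(y) := \chi(y - x)$. Since $\Delta\varphi \ge m > 0$, Theorem~\ref{thm2} applies to $f = D\chi_x$ and produces $u \in L^2_\varphi$ with $Du = D\chi_x$ and
\begin{equation*}
\|u\|_\varphi^2 \le \frac{2^{2n}}{m}\int |D\chi_x(y)|^2 e^{-2\varphi(y)}\, dy \le \frac{C}{m r^2} \int_{\{r \le |y-x|\le 2r\}} e^{-2\varphi(y)}\, dy.
\end{equation*}
Then $F := \chi_x - u$ satisfies $DF = 0$, lies in $F^2_\varphi$, and $F(x) = 1 - u(x)$.

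Next I would control the weight on $B(x, 2r)$: the $2$-homogeneity of $\varphi$ together with $\varphi \in C^2$ forces $\varphi$ to be a quadratic form, and coupled with $|\nabla\varphi| \le L|x|$ gives $|\varphi(y)-\varphi(x)| \le L(|x|+r)r + \tfrac{M}{2} r^2$ on $B(x, 2r)$. Choosing the scale $r \sim 1/(1+|x|)$, this oscillation is uniformly $O(1)$, so $\|\chi_x\|_\varphi^2$ and the integral bounding $\|u\|_\varphi^2$ above are both $\le C r^{n+1} e^{-2\varphi(x)}$. Hence $\|F\|_\varphi^2 \le C r^{n+1} e^{-2\varphi(x)}$.

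The main obstacle will be showing $|F(x)| \ge c > 0$ uniformly in $x$, equivalently that $|u(x)|$ stays bounded away from $1$. Since $D\chi_x \equiv 0$ on $B(x, r)$, the solution $u$ is left-monogenic there, so Theorem~\ref{thm3} yields a pointwise bound $|u(x)|e^{-\varphi(x)} \le C_r \|u\|_\varphi$; however, at the $x$-dependent scale $r \sim 1/(1+|x|)$ the direct estimate only gives $|u(x)| = O(r^{-1})$, which is not small enough. To close this gap I would select $u$ to be the $L^2_\varphi$-minimal solution (hence orthogonal to the closed subspace $F^2_\varphi$) and combine this orthogonality with the local monogenicity of $u$ on $B(x, r)$, in the spirit of Delin's argument; the $2$-homogeneity of $\varphi$ is essential here to make the scale invariance sharp and independent of $x$. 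Once $|F(x)| \ge \tfrac{1}{2}$ is secured, combining with the upper bound on $\|F\|_\varphi^2$ and the chosen scale yields
\begin{equation*}
|B_\varphi(x, x)| \ge \frac{|F(x)|^2}{\|F\|_\varphi^2} \ge C_2 e^{2\varphi(x)},
\end{equation*}
completing the proof.
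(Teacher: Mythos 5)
Your upper bound is correct and coincides with the paper's argument: the extremal characterization $|B_\varphi(x,x)|=\sup_{\|u\|_\varphi=1}|u(x)|^2$ plus Theorem \ref{thm3} at a fixed radius.

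The lower bound has a genuine gap, and it is precisely the one you flag yourself: you cannot show $|F(x)|\geq c>0$. The problem is structural, not technical. With your choice of scale $r\sim 1/(1+|x|)$ the oscillation of $\varphi$ on $B(x,2r)$ is $O(1)$, so the weight $e^{-2\varphi(y)}$ on the annulus $\{r\leq|y-x|\leq 2r\}$ is \emph{comparable} to $e^{-2\varphi(x)}$; the $L^2$ bound from Theorem \ref{thm2} then gives $\|u\|_\varphi^2\lesssim r^{n-1}e^{-2\varphi(x)}$, which is of the same order as $\|\chi_x\|_\varphi^2$ up to the factor $r^{-2}$, and the mean-value inequality on $B(x,r)$ only yields $|u(x)|=O(1/r)$. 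There is no mechanism in your construction forcing the correction $u$ to be small at the center, and the appeal to the minimal solution and Delin-type orthogonality is left entirely unexecuted, so the key inequality $|F(x)|\geq \tfrac12$ is not established.

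The paper avoids this by a different device. It rescales the weight to $k^2\varphi$, writes $\varphi=h+|x|^2+q$ near $0$ with $h$ harmonic of degree $\leq 2$ and $q=o(|x|^2)$, and takes the peak section $g(x)=k^{(n+1)/2}\chi(x/\tau)e^{k^2h(x)}$ with $\tau=\tau(k)$ chosen so that $c(\tau)\tau^2k^2=1$. The point is that $Dg$ is supported in the annulus $\tau/2<|x|<\tau$, where the Gaussian factor $e^{-2k^2|x|^2}$ has already decayed by $e^{-k^2\tau^2/4}$ with $k^2\tau^2\to\infty$; this exponential gain (absent in your construction) forces $\|u\|_{k^2\varphi}\to 0$ and, via Theorem \ref{thm3}, $k^{-(n+1)}|u(0)|^2e^{-2k^2\varphi(0)}\to 0$, while $\|g\|_{k^2\varphi}^2\to(\pi/2)^{(n+1)/2}$ and $|g(0)|=k^{(n+1)/2}e^{k^2\varphi(0)}$. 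This yields $|B_{k^2\varphi}(0,0)|\geq Ck^{n+1}e^{2k^2\varphi(0)}$ for large $k$, and the $2$-homogeneity is then used not for scale invariance of a Delin argument but to convert the bound on $B_{k^2\varphi}(x,x)$ into the bound on $B_\varphi(kx,kx)=B_\varphi(y,y)$ for arbitrary $y$. To repair your proof you would either have to import this rescaling/peak-section mechanism or genuinely carry out the orthogonality argument you allude to; as written, the lower bound is not proved.
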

	\begin{proof}
		First, by the weighted mean inequality we have 
		\begin{align*}
			|B_\varphi(z,x)|^2 e^{-2\varphi(z)} &\leq C_R^2 \int_{B_z(R)} |B_\varphi(w, x)|^2e^{-2\varphi(w)} dw\\
			&\leq C_R^2 \int_{\mathbb R^{n+1}} |B_\varphi(w,x)|^2 e^{-2\varphi(w)} dw\\
			&=C_R^2 \sup_{||u||_\varphi=1} |u(x)|^2\\
			&\leq C_R^4 e^{2\varphi(x)},
		\end{align*}
		
		where $C_R$ is the constant in Theorem \ref{thm3}.
		This shows that
		\begin{align*}
			|B_{\varphi} (z,x)| \leq C_R^2 e^{\varphi(x)+\varphi(z)}.
		\end{align*}
		
		To estimate the lower bound of $B_\varphi(x, x),$ we use a version of $L^2$-estimate method in the Clifford algebra setting.
		We first write 
		\begin{align*}
			\varphi(x) = h(x) + |x|^2 + q(x),
		\end{align*}
		where $h$ is a harmonic polynomial of degree less than or equal to two, $h(0)=\varphi(0)$ and $q(x)=o(|x|^2).$ In fact, one can reduce the formula of $\varphi(x)$ in the above form after a suitable linear transformation on $x$ centered at $0$ (cf. in the proof of
Theorem 10 in Lindholm's paper \cite{Lindholm}). We also have
		\begin{align*}
			|2\varphi(x)-2|x|^2-2h(x)|\leq c^2(\tau) \tau^2
		\end{align*}
		in $B_0(\tau),$ for some function $c(\tau)$ such that $c(0)=0$ and $c(\tau)$ is continuous and nondecreasing. Moreover, we can choose the unique $\tau=\tau(k)$ so that $c(\tau)\tau^2k^2=1$ for a given $k$. When $k$ is increasing, $c(\tau)\tau^2$ is decreasing. With the condition that $c(\tau)$ is nondecreasing, we deduce that $\tau$ is a strictly decreasing function of $k$, $\tau\to 0$ when $k\to\infty$. In $B_0(\tau)$ we get the estimate
		\begin{align*}
			|2k^2\varphi(x)-2k^2|x|^2-2k^2h(x)|\leq k^2c^2(\tau)\tau^2 = c(\tau).
		\end{align*}
		Choose $\chi\in C^\infty_c(\mathbb R^{n+1},\mathbb R),$ 
		such that supp $\chi\subset B_0(1)$, $\chi\equiv 1$ on $B_0(\frac{1}{2}).$
		Define $g(x)=k^\frac{n+1}{2}\chi(\frac{x}{\tau})e^{k^2h(x)}$. 
		Then 
		\begin{align*}
			||g||_{k^2\varphi}^2 &= \int_{\mathbb R^{n+1}} k^{n+1} e^{2k^2h(x)}\chi(\frac{1}{\tau}x) e^{-2k^2\varphi(x)}dx\\
			&\leq \int_{B_0(\tau)} k^{n+1} e^{2k^2h(x)} e^{-2k^2\varphi(x)}dx\\
			&\leq \int_{B_0(\tau)} k^{n+1} e^{-2k^2|x|^2-c(\tau)}dx\\
			&\leq e^{-c(\tau)}\int_{B_0(k\tau)} e^{-|x|^2} dx.
		\end{align*}
		Recall that $\tau\to 0$ when $k\to\infty$, then $c(\tau)\to c(0)=0$ and $k\tau=\frac{1}{\sqrt{c(\tau)}}\to\infty$.
		Hence 
		\begin{align*}
		||g||_{k^2\varphi}^2\to\int_{\mathbb{R}^{n+1}} e^{-|x|^2} dx=\left(\frac{\pi}{2}\right)^\frac{n+1}{2}.
		\end{align*}
		We also have $|D g(x)|^2 \leq Ck^{n+1}e^{2k^2h(x)}/\tau^2$ and $D g(x)= 0$ except when $\tau/2<|x|<\tau.$ Then using the analogue of H\" ormander's $L^2$-method in the Clifford algebra setting (cf. Theorem \ref{thm1}) 
		we can solve $Du = Dg$ with
		\begin{align*}
			\int_{\mathbb R^{n+1}} |u(x)|^2 e^{-2k^2\varphi(x)} dx &\leq C \int_{\tau/2<|x|<\tau} \frac{k^{n+1}}{\tau^2} e^{2k^2h(x)}\frac{1}{k^2} e^{-2k^2\varphi(x)} dx\\
			&\leq C \frac{k^{n+1}}{k^2\tau^2} \int_{\tau/2<|x|<\tau} e^{-2k^2|x|^2 +c(\tau)}dx\\
			&\leq C^\prime \pi^{n+1} \frac{e^{-k^2\tau^2/4}}{k^2\tau^2}e^{c(\tau)},
		\end{align*}
		which tends to $0$, because $k^2\tau^2 = 1/c(\tau)\to\infty$ as $k\to\infty$. Then by applying Theorem \ref{thm3} to $u$ we have
		\begin{align*}
			k^{-(n+1)} |u(0)|^2e^{-2k^2\varphi(0)} \leq M e^{c(\tau)} \int_{\mathbb R^{n+1}} |u(x)|^2e^{-2k^2\varphi(x)} dx \to 0.
		\end{align*}
		Hence $G(x)=g(x)-u(x)$ is left-monogenic and 
		\begin{align*}
			\frac{|G(0)|}{||G||_{k^2\varphi}} \geq \frac{|g(0)|-|u(0)|}{||g||_{k^2\varphi}+||u||_{k^2\varphi}}.
		\end{align*}
		By the estimates for $g$ and $u$ we have
		\begin{align*}
			\liminf_{k\to\infty} k^{-(n+1)}|B_{k^2\varphi}(0,0)|e^{-2k^2\varphi(0)} \geq \left(\frac{2}{\pi}\right)^\frac{n+1}{2}.
		\end{align*}
		Consequently, for $k$ sufficiently large, there exists a $\delta>0$ such that
		\begin{align*}
			k^{-(n+1)} |B_{k^2\varphi}(0,0)| e^{-2k^2\varphi(0)} \geq \left(\frac{2}{\pi}\right)^\frac{n+1}{2}-\delta >0,
		\end{align*}
		or equivalently, 
		\begin{align*}
			|B_{k^2\varphi}(0,0) | \geq \left(\left(\frac{2}{\pi}\right)^\frac{n+1}{2}-\delta\right) k^{n+1} e^{2k^2\varphi(0)}. 
		\end{align*}
		By translation, we have
		\begin{align*}
			|B_{k^2\varphi}(x,x)| \geq C k^{n+1}e^{2k^2\varphi(x)}.
		\end{align*}
		If additionally, $\varphi$ is $2$-homogeneous, we deduce that
		\begin{align*}
			|B_{\varphi}(kx,kx)| \geq C e^{2\varphi(kx)}, 
		\end{align*}
		or equivalently,
		\begin{align*}
			|B_{\varphi}(y,y)| \geq C e^{2\varphi(y)}, \quad y\in \mathbb R^{n+1}.
		\end{align*}
	\end{proof}
	
	\subsection{Estimate of the Bergman kernel off the diagonal}
	
	In the following we give an estimate of the Bergman kernel off the diagonal,
	after which we complete the proof of our main theorem.
	\begin{prop}\label{p-0128}
		There exist constants $C>0,\alpha>0$ such that
		\begin{equation*}
			|B_\varphi(x, y)|\leq Ce^{\varphi(x)+\varphi(y)-\alpha|x-y|}.
		\end{equation*}
	\end{prop}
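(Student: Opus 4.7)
The plan is to adapt the Christ--Delin $L^2$-argument from complex analysis, combining Theorem~\ref{thm2} and Theorem~\ref{thm3} with a suitably perturbed weight. Write $\rho := |x-y|$; the case $\rho \leq 2$ follows from the on-diagonal upper bound and the Cauchy--Schwarz estimate $|B_\varphi(x,y)|^2 \leq 2^{2n} B_\varphi(x,x) B_\varphi(y,y)$, so I may assume $\rho \geq 2$. Introduce an auxiliary weight $\mu = \varphi - \alpha\psi$, where $\psi(w) = (1 + |w-y|^2)^{1/2} - 1$ is smooth and non-negative with $\psi(y) = 0$, $|\del\psi| \leq 1$, $\laplace\psi$ bounded, and $\psi(w) \geq |w-y| - 1$. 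Taking $\alpha > 0$ small enough that $\laplace\mu \geq m/2$ guarantees Theorem~\ref{thm2} applies with weight $\mu$. The effect of the perturbation is that $\mu$ is comparable to $\varphi$ on $B_y(1)$ but strictly smaller by an amount $\sim \alpha|w-y|$ far from $y$.

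Take a scalar cutoff $\chi$ with $\chi = 1$ on $B_y(1/2)$ and $\chi = 0$ off $B_y(1)$, and set $h := \chi\, B_\varphi(\cdot, y)$. Since $\chi$ is scalar and $B_\varphi(\cdot, y)$ is left-monogenic, $Dh = (D\chi) B_\varphi(\cdot, y)$ is supported in the annulus $A = \{1/2 \leq |w-y| \leq 1\}$. On $A$, $\psi$ is bounded, so $e^{-2\mu}$ is comparable to $e^{-2\varphi}$; combined with the easy bound $|B_\varphi(w,y)|^2 e^{-2\varphi(w)} \leq C_1 e^{2\varphi(y)}$ (which follows from the reproducing formula and the on-diagonal upper bound), one finds $\int |Dh|^2 e^{-2\mu}\,dw \leq C e^{2\varphi(y)}$. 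Theorem~\ref{thm2} then produces $u$ with $Du = Dh$ weakly and $\|u\|_\mu^2 \leq Ce^{2\varphi(y)}$. The function $f := h - u$ is left-monogenic with $\|f\|_\mu \leq C e^{\varphi(y)}$, and since $Dh \equiv 0$ on $B_x(\rho-1)$, the solution $u$ itself is left-monogenic there. For $w \in B_x(R)$ with $R \in (0,1)$ fixed, $\psi(w) \geq \rho - R - 1$, so $e^{-2\varphi(w)} \leq e^{-2\alpha(\rho-R-1)} e^{-2\mu(w)}$, giving
\[
\int_{B_x(R)} |u|^2 e^{-2\varphi}\, dw \;\leq\; e^{-2\alpha\rho + O(1)} \|u\|_\mu^2 \;\leq\; C e^{2\varphi(y) - 2\alpha\rho}.
\]
Applying Theorem~\ref{thm3} to the monogenic $u$ at $x$ yields $|u(x)| \leq C_R\, e^{\varphi(x) + \varphi(y) - \alpha\rho}$, and since $\chi(x) = 0$, $f(x) = -u(x)$ obeys the same bound.

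The remaining and most delicate step is to transfer the estimate on $|f(x)|$ to one on $|B_\varphi(x, y)|$: the constructed $f$ is not literally the reproducing kernel, and the naive decomposition $B_\varphi(\cdot, y) = f + \bigl((1-\chi)B_\varphi(\cdot, y) + u\bigr)$ only bounds the remainder by the non-decaying $Ce^{\varphi(x)+\varphi(y)}$ via Cauchy--Schwarz. My plan to close the gap is to refine the construction so that $f(y) = B_\varphi(y, y)$ holds exactly, thereby identifying $f$ with $B_\varphi(\cdot, y)$ by the extremal characterization of the reproducing kernel (the unique monogenic $F \in F^2_\varphi$ with $F(y) = B_\varphi(y, y)$ and minimal $\|F\|_\varphi$). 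To force $u(y) = 0$ I would insert a subharmonic logarithmic singularity $c \log|w-y|$ into $\mu$; since $\laplace \log|w-y| = (n-1)/|w-y|^2 \geq 0$ in $\mathbb R^{n+1}$, Theorem~\ref{thm2} continues to apply, and with $c \in [(n+1)/2, (n+3)/2)$ the $L^2$-integrability against the singular weight forces first-order vanishing at $y$, while $h$ must be replaced by $\chi(B_\varphi(\cdot,y) - B_\varphi(y,y))$ to keep it in the singular $L^2$-space. Combined with an extremal-variance estimate $\|f\|_\varphi^2 - |f(y)|^2/B_\varphi(y,y)$ to transfer the decay from $f(x)$ to $B_\varphi(x,y) - f(x)$, this produces the desired off-diagonal bound; the tuning of $c$ and the careful bookkeeping of the perturbed $L^2$-norms is the main technical difficulty I anticipate.
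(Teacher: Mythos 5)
Your construction up to the bound $|f(x)| = |u(x)| \leq C e^{\varphi(x)+\varphi(y)-\alpha|x-y|}$ is sound: the perturbed weight $\mu = \varphi - \alpha\psi$, the cutoff $h = \chi B_\varphi(\cdot,y)$, the application of Theorem~\ref{thm2} in the $\mu$-weighted space, and the local use of Theorem~\ref{thm3} at $x$ (where $u$ is monogenic, since $Dh$ is supported in an annulus around $y$ disjoint from $B_x(1)$) all work. But the gap you flag at the end is real, and your proposed repair does not close it. Forcing $u(y)=0$ via a logarithmic singularity gives $f(y)=B_\varphi(y,y)$, but the extremal characterization of the kernel does not then identify $f$ with $B_\varphi(\cdot,y)$: the kernel is the \emph{norm-minimizer} among monogenic functions with that value at $y$, and nothing forces your $f$ to be norm-minimal — two distinct elements of $F^2_\varphi$ can agree at a point. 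The variance route fares no better: the identity $\|f - B_\varphi(\cdot,y)\|_\varphi^2 = \|f\|_\varphi^2 - |f(y)|^2/B_\varphi(y,y)$ (already delicate to even formulate in the non-commutative setting) would require $\|f\|_\varphi^2 - B_\varphi(y,y)$ to be $O(e^{-2\alpha|x-y|})e^{2\varphi(y)}$, whereas your construction only gives $\|u\|_\varphi \leq \|u\|_\mu = O(e^{\varphi(y)})$ near $y$ and $\|h\|_\varphi^2 = B_\varphi(y,y) - \int(1-\chi^2)|B_\varphi(\cdot,y)|^2 e^{-2\varphi}$, whose defect is precisely the tail you are trying to estimate. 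So the transfer step is circular or lossy by a factor with no decay.

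The paper avoids this by dualizing rather than approximating. It bounds the tail directly:
\begin{equation*}
\int_{|z-x|>\delta}|B_\varphi(z,x)|^2 e^{-2\varphi}\,dz \;\leq\; \sup\Big\{\,|P_\varphi(f\tau)(x)|^2 : f \text{ monogenic},\ \textstyle\int |f|^2\tau e^{-2\varphi}=1\Big\},
\end{equation*}
with $\tau$ a cutoff vanishing on $B_x(\delta/2)$ and equal to $1$ outside $B_x(\delta)$, $\delta=|x-y|/2$. Writing $P_\varphi(f\tau) = f\tau - u$ with $u$ the minimal solution of $Du = (D\tau)f$ and using $\tau(x)=0$, the quantity to control is again $|u(x)|$, but now the data $(D\tau)f$ is supported at distance $\sim\delta$ from $x$, and an Agmon-type weighted $L^2$ estimate (the analogue of your $e^{-2\alpha\psi}$ trick, applied to $e^{-c_1\rho}u$ with $\rho = \mathrm{dist}(\cdot, B_x(\delta/3))$) yields $|u(x)|^2 e^{-2\varphi(x)} \leq Ce^{-c\delta}$. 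The mean-value inequality at $y$ then converts the tail bound into the pointwise bound on $|B_\varphi(y,x)|$, with no need to identify any auxiliary monogenic function with the kernel. I recommend you restructure your argument along these dual lines; your weight perturbation and $L^2$ machinery transfer over essentially unchanged.
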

	
	\begin{proof}
		By Theorem \ref{thm3}, we have
		\begin{equation}\label{eq1}
			|B_\varphi(y,x)|^2e^{-2\varphi(y)}\leq C_3\int_{B_{y}(1)}|B_\varphi( z,x )|^2e^{-2\varphi(z)}dz.
		\end{equation}
		When $|x-y|\leq 8$,
		\begin{equation*}
			|B_\varphi(y,x)|^2e^{-2\varphi(y)}\leq C_3\int_{\mathbb{R}^{n+1}}|B_\varphi(z,x)|^2e^{-2\varphi(z)}dz=|B_\varphi(x,x)|\leq C_4e^{2\varphi(x)}.
		\end{equation*}
		Then it is done.
		
		Now it suffices to consider the case when $|x-y|>8$.
		Let $\delta=|x-y|/2$. Let $\tau$ be a smooth function, where $\tau=1$ outside $B_x(\delta)$, $\tau=0$
		in $B_x(\delta/2)$ and $|D\tau|^2\leq\frac{\tau}{\delta^2}$.
		When $\gamma(x):=\int_{\mathbb R^{n+1}}|B_\varphi(z,x)|^2\tau(z)e^{-2\varphi(z)}dz>0$ for fixed $x$, we can set $f(z):={B_\varphi(z,x)}/\sqrt{\gamma(x)}$ and derive the following
		\begin{equation}\label{eq2}
			\begin{split}
				\int_{|x-z|>\delta}|B_\varphi(z,x)|^2e^{-2\varphi(z)} dz&\leq \int_{\mathbb R^{n+1}}|B_\varphi(z,x)|^2\tau(z)e^{-2\varphi(z)}dz\\
				&\leq\sup_{\int|f|^2\tau e^{-2\varphi}=1}\big| \int_{\mathbb R^{n+1}}\overline{ B_\varphi(z, x)}f(z)\tau(z)e^{-2\varphi(z)} dz \big|^2\\
				&=\sup_{\int|f|^2\tau e^{-2\varphi}=1}|P_\varphi(f\tau)(x)|^2,
			\end{split}
		\end{equation}
		where $f$ is left-monogenic and $P_\varphi$ is the orthogonal projection from $L^2_\varphi(\mathbb{R}^{n+1},\mathcal A_n)$ to $F^2_\varphi(\mathbb{R}^{n+1},\mathcal A_n)$ defined by
		\begin{equation*}
			P_\varphi(g)(x):=\int_{\mathbb R^{n+1}} \overline{B_\varphi(y, x)}g(y)e^{-2\varphi(y)}dy,\quad g\in L^2_\varphi(\mathbb R^{n+1},\mathcal{A}_n).
		\end{equation*}
		Let $u$ be the solution of the equation $D u=D(f\tau)$ with minimal norm in $L^2_\varphi(\mathbb{R}^{n+1},\mathcal A_n)$.
		Then $P_\varphi(f\tau)=f\tau-u$.
		It follows from \eqref{eq1}, \eqref{eq2} and $\tau(x)=0$ that
		\begin{equation}\label{eq3}
			|B_\varphi(y,x)|^2e^{-2\varphi(y)}\leq C_5\sup_{\int|f|^2\tau e^{-2\varphi}=1} |f(x)\tau(x)-u(x)|^2=C_5\sup_{\int|f|^2\tau e^{-2\varphi}=1} |u(x)|^2,
		\end{equation}
	where the first inequality follows from (\ref{eq1}) with
		\begin{align*}
		    |x-z|>|x-y|-|z-y|>|x-y|-\frac{|x-y|}{2}=\delta.
		\end{align*}
	Set $\rho(y):=\text{dist}(y,B_x(\delta/3))$. By Theorem \ref{thm2} we have
	\begin{equation}\label{eq4}
			\begin{split}
				|u(x)|^2e^{-2\varphi(x)}&\leq C_6\int_{B_x(1)}|u(z)|^2e^{-2\varphi(z)}dz\\
				&\leq C_7\int_{\mathbb R^{n+1}} e^{-2c_1\rho(z)}|u(z)|^2e^{-2\varphi(z)}dz\\
				&\leq C_7C_8\int_{\mathbb R^{n+1}}| D(e^{-c_1\rho(z)}u(z))|^2e^{-2\varphi(z)}dz\\
				&\leq 2C_7C_8\int_{\mathbb R^{n+1}}| De^{-c_1\rho(z)}|^2|u(z)|^2e^{-2\varphi(z)}dz\\
				&+2C_7C_8\int_{\mathbb R^{n+1}} e^{-2c_1\rho(z)}|f(z)|^2|D\tau|^2e^{-2\varphi(z)}dz.
			\end{split}
		\end{equation}
		The constant $c_1$ is determined such that  the following inequality holds
		\begin{equation*}
			| De^{-c_1\rho}|\leq \frac{1}{\sqrt{2C_8+1}}e^{-c_1\rho}.
		\end{equation*}
	Then we have 
		\begin{equation}\label{e-0701}
		\int_{\mathbb R^{n+1}} e^{-2c_1\rho(z)}|u(z)|^2e^{-2\varphi(z)}dz\leq \tilde C_8\int_{\mathbb R^{n+1}} e^{-2c_1\rho(z)}|f(z)|^2|D\tau|^2e^{-2\varphi(z)}dz.
		\end{equation}
		Hence by \eqref{eq3}, \eqref{eq4} and \eqref{e-0701} we obtain
		\begin{equation}\label{e-0702}
			\begin{split}
				|B_\varphi(y,x)|^2e^{-2\varphi(y)-2\varphi(x)}&\leq C_9\int_{\mathbb{R}^{n+1}}|u(z)|^2e^{-2\varphi(z)}e^{-2c_1\rho(z)}dz\\
				&\leq C_{10}\int_{\mathbb R^{n+1}} e^{-2c_1\rho(z)}|f(z)|^2|D\tau|^2e^{-2\varphi(z)}dz\\
				&\leq C_{10}\int_{\mathbb{R}^{n+1}\setminus B_x(\delta/2)} e^{-2c_1\rho(z)}|f(z)|^2\frac{\tau}{\delta^2}e^{-2\varphi(z)}dz.
			\end{split}
		\end{equation}
		Note that for $z\notin B_x(\delta/2)$, $\rho(z)\geq \alpha_1\delta$ for some constant $\alpha_1>0$.
		We deduce from \eqref{e-0702} that there exist constant $C>0$ and $\alpha>0$,
		\begin{equation}\label{e-0703}
			\begin{split}
			|B_\varphi(y,x)|^2e^{-2\varphi(y)-2\varphi(x)}&\leq C_{10}\frac{e^{-2c_1\alpha_1\delta}}{\delta^2}\int_{\mathbb{R}^{n+1}\setminus B_x(\delta/2)} |f(z)|^2 \tau e^{-2\varphi(z)}dz \\
			&=C_{10}\frac{e^{-2c_1\alpha_1\delta}}{\delta^2}\leq C^2e^{-4\alpha\delta}=C^2e^{-2\alpha |x-y|}.
			\end{split}
		\end{equation}
		Equivalently, we have
		\begin{equation*}
			|B_\varphi(y,x)|^2\leq C^2e^{2\varphi(y)+2\varphi(x)-2\alpha|x-y|}.
		\end{equation*}
		The proof is completed.
	\end{proof}
	
\begin{proof}[{\rm \textbf{Proof of Theorem \ref{thm4}:}}]	
	By the proof of Theorem 1.4, we have
	\begin{align}\label{mv_har}
			\begin{split}	|u(x)|^2e^{-2\varphi(x)} \leq & C_{R}^2 \int_{B_x(R)}|u(y)|^2e^{-2\varphi(y)}dy\\
				\leq & C^2_R\int_{\mathbb R^{n+1}}|u(y)|^2e^{-2\varphi(y)}dy.
			\end{split}
	\end{align}
	Then by the Riesz representation theorem, there exists $B_{\varphi,har}(\cdot,x)\in F^2_{\varphi,har}(\mathbb R^{n+1})$ such that 
	\begin{align*}
	    u(x) = (u,B_{\varphi,har}(\cdot, x))_\varphi =\int_{\mathbb R^{n+1}}\overline{B_{\varphi,har}(y,x)}u(y)e^{-2\varphi(y)}dy
	\end{align*}
	for all $u\in F^2_{\varphi,har}(\mathbb R^{n+1}).$ {Moreover, we have
	\begin{align*}
	    B_{\varphi,har}(x,x)=\sup_{||u||_{\varphi,har}=1} |u(x)|^2,
	\end{align*}
	 which follows from
	\begin{align*}
	    B_{\varphi,har}(x,x) 
	    & = \int_{\mathbb R^{n+1}}|B_{\varphi,har}(y,x)|^2e^{-2\varphi(y)}dy\\
	    & = \sup_{||u||_{\varphi,har}=1}\left|\int_{\mathbb R^{n+1}}\overline{B_{\varphi,har}(y,x)}u(y)e^{-2\varphi(y)}dy\right|^2\\
	    & = \sup_{||u||_{\varphi,har}=1}|u(x)|^2.
	\end{align*}
	Then by (\ref{mv_har}) we have 
		\begin{align*}
			|B_{\varphi,har}(z,x)|^2 e^{-2\varphi(z)} &\leq C_R^2 \int_{B_z(R)} |B_{\varphi,har}(w, x)|^2e^{-2\varphi(w)} dw\\
			&\leq C_R^2 \int_{\mathbb R^{n+1}} |B_{\varphi,har}(w,x)|^2 e^{-2\varphi(w)} dw\\
			&=C_R^2 \sup_{||u||_{\varphi,har}=1} |u(x)|^2\\
			&\leq C_R^4 e^{2\varphi(x)}.
		\end{align*}
	Hence, there exists a constant $C$ such that
	\begin{align*}
	    |B_{\varphi,har}(z,x)|\leq Ce^{\varphi(x)+\varphi(z)}.
	\end{align*}
	}
\end{proof}

\end{document}